\theoremstyle{plain}
\newtheorem{lem}{Lemma}[section]
\newtheorem{defn}[lem]{Definition}
\newtheorem{thm}[lem]{Theorem}
\newtheorem{prop}[lem]{Proposition}
\newcommand{\R}{\mathbb{R}}
\newcommand{\Z}{\mathbb{Z}}
\newcommand{\C}{\mathbb{C}}
\newcommand{\loc}{\text{loc}}
\newcommand{\Morse}{\operatorname{Morse}}
\newcommand{\Crit}{\operatorname{Crit}}
\newcommand{\A}{\mathcal A}
\newcommand{\E}{\mathcal E}
\newcommand{\Intervals}{\mathcal I}
\newcommand{\HZ}{\text{HZ}}
\definecolor{lightcyan}{rgb}{0.0 1.0 1.0}
\newcommand{\sam}[1]
{\protect\todo[inline,color=yellow,author=Sam]{#1}}
\newcommand{\tony}[1]
{\protect\todo[inline,color=lightcyan,author=Tony]{#1}}
\DeclarePairedDelimiter{\floor}{\lfloor}{\rfloor}
\newcommand{\tonycut}[1]{}
\newcommand{\defin}[1]{\textbf{#1}}
\title[Lagrangian H-Z capacities and energy-capacity inequalities]{Lagrangian Hofer-Zehnder capacities and energy-capacity 
	inequalities}
\author{Samuel Lisi}
\author{Antonio Rieser}
\thanks{Samuel Lisi was partially supported by NSF award \# 1929176 and by the ERC  
	Starting Grant of Vincent Colin (Geodycon). Antonio Rieser was partially supported by CONAHCYT/SECIHTI Investigadoras y Investigadores
	por México Project 1076, CONAHCYT Ciencia de Frontera Grant CF-2019-217392, ERC
	2012 Advanced Grant 20120216, ISF grant 723/10, and Short Visit Grants from the
	Contact and Symplectic Topology (CAST) Research Network of the European Science
	Foundation.}
\begin{document}

	\begin{abstract}
		We introduce a new coisotropic Hofer-Zehnder capacity and use it to prove an energy-capacity inequality for displaceable Lagrangians.
	\end{abstract}
\maketitle

\section{Introduction and main results}

In this paper, we revisit the coisotropic capacity introduced in
\cite{Lisi_Rieser_2020}, but consider the special case when the 
coisotropic submanifold in question is a Lagrangian
submanifold. In this special case, we establish 
a version 
of an energy-capacity
inequality relating a coisotropic capacity to displacement energy,
specifically in the case of a monotone Lagrangian.

There have been a number of interesting papers that have built on  ideas
connected to coisotropic capacities \citelist{
\cite{Shi_2024}
\cite{Shi_2024_b}
\cite{Jin_Lu_2020}
\cite{Jin_Lu_2023}
\cite{Jin_Lu_2023_b}
}. 
To the best of our knowledge, all of these papers (and of course
\cite{Lisi_Rieser_2020}) feature some form of variational principle,
and are thus focused on problems in which the coisotropic submanifold
is a linear subspace of $\R^{2n}$ or similar. 
The principal new element of
our current paper is that we work with Floer homology and consider
arbitrary monotone symplectic manifolds. The price we pay, however,
is that we are restricted to cases where Floer homology is defined.
Here, we treat the case of monotone Lagrangian
submanifolds.
Our perspective here can be thought of as closely connected to the ideas 
of Lagrangian spectral invariants \citelist{\cite{Leclercq_2008}
    \cite{Leclercq_Zapolsky_2018} 
    \cite{Kislev_Sheluskhin_2022}
    \cite{Kawasaki_2018}
}. A different approach to spectral capacities related to Lagrangian submanifolds was taken in \cite{Cant_Zhang_2024}, in which they investigated spectral capacities of neighborhoods of Langrangian submanifolds.

Our 
energy-capacity inequality relates {the} displacement energy of the
Lagrangian and a modified version of the coisotropic Hofer-Zehnder capacity 
{from \cite{Lisi_Rieser_2020}}.
We study a modified version of that coiostropic capacity here, where the class
of admissible Hamiltonians {is restricted} to those that achieve their maximum on a ball.    
(The precise formulation is given in Definition \ref{def:modified_admissible}.)
Heuristically speaking, this means that the
maximum of $H$ represents the unit in the Lagrangian Quantum ring and, in the
case that the Lagrangian is displaceable and is monotone with minimal Maslov
number at least 2, the
chords/critical points that kill it must have $H=0$ with a connecting
Floer trajectory that represents a non-trivial relative homology class.
The monotonicity condition on $L, (W, \omega)$ is also necessary so that the Quantum
Lagrangian Complex described by Biran and Cornea
\citelist{\cite{Biran_Cornea_new_persp} \cite{Biran_Cornea_pp_2007}} is well-defined.
{Given this coisotropic capacity, our first theorem is the following.}
\begin{thm}
    
    \label{T:displacement}
    Let $(W,\omega)$ be a compact or geometrically bounded symplectic manifold, and suppose that $L \subset (W, \omega)$ is a connected, closed Lagrangian. 
    Suppose, furthermore, that $L$ is monotone with
    minimal Maslov number at least $2$. 

    Let $d(L)$ denote the displacement energy of the Lagrangian $L$ and let 
    $\hat c_\HZ(W, L)$ denote the modified relative Hofer-Zehnder capacity. Then 
    \[
        \hat c_\HZ(W, L) \le d(L).
    \]
\end{thm}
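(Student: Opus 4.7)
\emph{Proof plan.} The plan is to argue by contradiction via filtered Lagrangian Floer homology in the Biran--Cornea framework, in the spirit of Chekanov and Frauenfelder--Ginzburg energy-capacity inequalities. Suppose $\hat c_{\HZ}(W,L) > d(L)$, and fix a small $\epsilon>0$ together with an admissible Hamiltonian $H$ for the modified capacity realizing capacity-value $c > d(L)+2\epsilon$, whose maximum is attained on a symplectic ball $B \subset W$ (necessarily disjoint from $L$ by the modified admissibility). Fix also a Hamiltonian $F$ with $\phi_F^1(L) \cap L = \emptyset$ and $\|F\|_{\operatorname{Hofer}} < d(L)+\epsilon$.

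The first step is to set up Lagrangian Floer homology $HF(L,L;K)$ with respect to a Hamiltonian $K$ obtained by concatenating (a slow version of) $H$ with $F$. Because $L$ is monotone with minimal Maslov number at least $2$, the Biran--Cornea quantum Lagrangian complex and the associated Floer homology are well defined and carry an action filtration; since $\phi_K^1$ displaces $L$, $HF(L,L;K)=0$.

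The second step, following Leclercq--Zapolsky, is to study the Lagrangian spectral invariant $\ell(1_L;K)$ of the unit $1_L$ of the quantum Lagrangian ring. The strategy is to prove a lower bound of the form $\ell(1_L;K) \ge -c + \|F\|_{\operatorname{Hofer}} + O(\epsilon)$. The content of this estimate is that the admissibility of $H$ forces every PSS representative of $1_L$ in $CF(L,L;K)$ to remain above the $-c$ action threshold: the hypothesis that $H$ has no short Hamiltonian chord with endpoints on $L$ prevents the PSS cycle from escaping to low action, while the $H$-maximum on the ball $B$ contributes no chords since $B \cap L = \emptyset$. Concatenation with $F$ shifts action by at most $\|F\|_{\operatorname{Hofer}}$. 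Since $HF(L,L;K) = 0$ gives $\ell(1_L;K) = -\infty$, the lower bound is contradicted as soon as $c > d(L)+\epsilon$. Letting $\epsilon \to 0$ and optimizing over admissible $H$ would yield the theorem.

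The main technical obstacle is the action estimate in the second step. The delicate point is that in the Biran--Cornea model $1_L$ is classically represented by a Morse chain on $L$ itself, whereas the geometry controlling admissibility for $\hat c_{\HZ}$ lives on the ball $B$ away from $L$. Converting the absence of short $L$-chords of $H$ into the desired action lower bound on a PSS representative of the unit therefore requires a careful pearl/continuation comparison between the Floer complex of $K$ and the Lagrangian quantum complex, exploiting the fact that the generators of low action are precisely the ones ruled out by admissibility. It is here that the monotonicity hypothesis and the condition that the minimal Maslov number be at least $2$ play an essential role, ruling out the obstruction from pseudo-holomorphic disk bubbling that would otherwise spoil both the definition of the relevant chain-level structures and the action estimate itself.
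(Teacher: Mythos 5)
There is a genuine gap, and it starts with a misreading of the definition you are trying to use. In Definition \ref{def:modified_admissible}, condition (2) requires that the maximum set of an admissible $H$ \emph{intersects} $L$, and that its intersection with $L$ is diffeomorphic to a closed $n$-dimensional ball inside the $n$-manifold $L$. Your proposal instead assumes the maximum is attained on a symplectic ball $B\subset W$ ``necessarily disjoint from $L$,'' and your key action estimate explicitly relies on ``$B\cap L=\emptyset$'' so that the maximum ``contributes no chords.'' This is the opposite of the actual situation: the constant chords sitting on the maximum ball in $L$ are precisely the generators that carry the unit, and the entire mechanism of the theorem is that the class they represent in the action window around $m(H)$ is $\Z/2[n]$ (Theorem \ref{thm:filtered homology of the maximum}) and must be killed, for degree and action-spectrum reasons, by a class of action at most $\kappa_0 a_0\le d(L)$. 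With your reading, the capacity you are bounding is a different invariant, and Section \ref{sec:counterexample} shows that for such an invariant no energy--capacity inequality can hold; so the strategy cannot be repaired by tightening the estimates.

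Beyond this, even granting a corrected setup, the proposal defers the entire content of the proof to the ``main technical obstacle'': the asserted lower bound $\ell(1_L;K)\ge -c+\lVert F\rVert+O(\epsilon)$ is never derived, and it is exactly here that the paper has to work. The paper's route is to (i) compute the local Floer homology of the degenerate maximum ball via Lemmas \ref{lem:local Floer supported in U}--\ref{lem:admissible from function}, (ii) use Chekanov's theorem (Proposition \ref{prop:chekanov}) to produce $y$ with $\partial y=m_0$ and $\A_0(y)\le d(L)$, and (iii) control the continuation maps by slicing the homotopy $\tau\mapsto\tau H_0$ into steps of size less than $a_0/2$ and running an exact-triangle/degree argument (Lemmas \ref{lem:base case} and \ref{lem:inductive step}); a single global spectral-invariant estimate of the kind you propose would fail because a continuation map over the whole homotopy can shift action past multiples of $a_0$, which is precisely what the inductive slicing prevents. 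You would need to supply substitutes for all three of these steps for the plan to become a proof.
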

This is proved in Section \ref{S:displaceableLag}. The key technical ingredients
of the proof are the use of persistent homology for filtered Floer homology
(following ideas from Polterovich and Shelukhin
\cite{Polterovich_Shelukhin_2016}{Definitions 2.6, 2.7 and Remark 2.9} and
local Floer homology \citelist{\cite{Ginzburg_2010} \cite{Ginzburg_Gurel_2010}}.
As a corollary, this inequality gives a new proof of the bound on the relative
Gromov radius of Barraud-Cornea \cite{Barraud_Cornea_2007}, 
obtained by Biran and Cornea \cite{Biran_Cornea_pp_2007}*{Proposition 6.6.1}.

In Section \ref{sec:counterexample}, we illustrate the necessity of the
modification of the relative Hofer-Zehnder capacity.  We construct a family 
of displaceable Lagrangians, which have uniformly bounded displacement energy, but
whose relative Hofer-Zehnder capacity, using the
definition we gave in \cite{Lisi_Rieser_2020}, can be made to be arbitrarily large. We note that a different energy-capacity inequality for the original relative Hofer-Zehnder capacity is found in \cite{Humiliere_Leclercq_Seyfaddini_2013} (where it is also attributed to us), and proved via different methods in \cite{Borman_McLean_2014}

\subsection*{Acknowledgements}

We would like to thank 
Vincent Humili\`ere, R\'emi Leclercq and Sobhan Seyfaddini 
for helpful conversations and encouragement.

\section{The modified coiostropic Hofer-Zehnder capacity}

\subsection{The modified relative Hofer-Zehnder capacity}

In the following, let $(W, \omega)$ be a symplectic manifold 
and let $L \subset W$ be an embedded Lagrangian
submanifold. We will later impose additional conditions on {both} $(W, \omega)$ and
$L$. We define the modified relative Hofer-Zehnder capacity of $L$ as follows. 

\begin{defn} \label{def:modified_admissible} 
An autonomous Hamiltonian $H \colon W \to \R$ is admissible if all of the following properties
hold:
\begin{enumerate}
    \item All Hamiltonian trajectories $x(t)$ such that $x(0) \in L$ and $x(T)
         \in L$ for some $T > 0$ must either have $T > 1$ or are constant
         trajectories. 
    \item The Hamiltonian $H$ achieves its maximum $m(H)$ on a set that
        intersects $L$. Furthermore, this level set on $L$ is diffeomorphic to
        a (closed) $n$-dimensional ball.
     \item \label{item:Condition 3} The Hamiltonian $H$ restricted to $L$ has two critical values, $0$
         and $m(H)$. 
     \item The Hamiltonian $H$ is compactly supported in $W$. (Its $0$ set
         must also intersect $L$ by the condition (\ref{item:Condition 3}).)
\end{enumerate}
Note, in particular, that if $H$ is admissible then so is $\tau H$ for any $\tau
\in [0, 1]$. 

The (modified) relative Hofer-Zehnder capacity $\hat c_{\HZ}(W, L)$ is defined to be
\[
    \hat c_{\HZ}(W, L) = \sup \{ m(H) \, | \, H \text{ is admissible.} \}.
\]
\end{defn}

Notice that there are two new features of the modified relative Hofer-Zehnder capacity
as compared to the relative Hofer-Zehnder capacity we introduced in 
\cite{Lisi_Rieser_2020}: the first is cosmetic, and involves
replacing $H$ by $m(H) - H$; the second is essential 
and regards the topology of the set on
which $H$ achieves its maximum, which we now require to be diffeomorphic to a closed $n$-dimensional ball. The first change is for ease of compatibility
with the sign conventions of Biran and Cornea
\citelist{\cite{Biran_Cornea_pp_2007} \cite{Biran_Cornea_new_persp}}. The
second change is essential for the inequality we will prove. Its necessity is
illustrated in Section \ref{sec:counterexample}.

Observe that despite these changes, the monotonicity and conformality axioms
of a coisotropic capacity, as defined in \cite{Lisi_Rieser_2020}, are satisfied. 
The lower bound on the original coisotropic capacity
of a ball in \cite{Lisi_Rieser_2020} was obtained by a construction using a radial Hamiltonian. This same radial Hamiltonian is also admissible in the
more restrictive sense of Definition \ref{def:modified_admissible}, and so the same lower bound is also satisfied. 
The upper bound also holds by the elementary observation that the modified
relative Hofer-Zehnder capacity is bounded above by the un-modified relative
Hofer-Zehnder capacity. We have therefore verified that $\hat c_{\HZ}$ is an
example of a coisotropic capacity as defined in \cite{Lisi_Rieser_2020}. 

\subsection{Failure of Theorem \ref{T:displacement} for the unmodified capacity} \label{sec:counterexample}

We now illustrate with an example why it was necessary to introduce the modified coisotropic
Hofer-Zehnder capacity. Recall that our original version of the coisotropic
Hofer-Zehnder capacity (from \cite{Lisi_Rieser_2020}) puts no condition on the
topology of the sets on which the admissible Hamiltonian achieves its maximum and
minimum. In the following, we write that a Hamiltonian $H$ is \emph{HZ-admissible} 
if it is admissible in this weaker sense. 

First note the following immediate lemma:
\begin{lem}\label{lem:Product ineq}
Let $(X, \omega_X)$ and $(P, \omega_P)$ be symplectic manifolds. Let $L \subset X$ be a properly embedded Lagrangian and $Q \subset P$ be a closed, embedded Lagrangian. Then $$c(L, X) \le c( L \times Q, X \times P)$$.
\end{lem}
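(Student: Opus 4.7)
The plan is to construct, for any HZ-admissible Hamiltonian $H$ on $(X,L)$, a corresponding HZ-admissible Hamiltonian $\tilde H$ on $(X \times P, L \times Q)$ with $m(\tilde H) = m(H)$. Taking the supremum over such $H$ then yields the desired inequality. The main idea is to pull $H$ back by the projection $\pi_X : X \times P \to X$ but to cut it off in the $P$ direction using a function $\chi$ that equals $1$ on a neighborhood of $Q$.

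Concretely, since $Q$ is a closed (compact) embedded submanifold of $P$, there exists a relatively compact open neighborhood $U$ of $Q$ in $P$ and a smooth function $\chi \colon P \to [0,1]$, compactly supported in $U$, with $\chi \equiv 1$ on some open neighborhood $V \subset U$ of $Q$. Define
\[
    \tilde H(x, p) = \chi(p)\, H(x).
\]
The support of $\tilde H$ is contained in $\supp(H) \times \supp(\chi)$, which is compact, so $\tilde H$ is compactly supported in $X \times P$. Its maximum value is $m(H)$, attained on $H^{-1}(m(H)) \times V$, a set which intersects $L \times Q$ since $H^{-1}(m(H))$ meets $L$ and $V \supset Q$.

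The key verification is the chord-length condition. With the product symplectic form $\omega_X + \omega_P$, the Hamiltonian vector field is
\[
    X_{\tilde H}(x, p) = \bigl(\chi(p)\, X_H(x),\; H(x)\, X_\chi(p)\bigr).
\]
Consider a trajectory $(x(t), p(t))$ with $(x(0), p(0)) \in L \times Q$. Since $p(0) \in Q \subset V$ and $\chi \equiv 1$ on $V$, we have $X_\chi(p(0)) = 0$, hence $\dot p(0) = 0$. More generally, as long as $p(t) \in V$ we get $\dot p(t) = 0$; thus $p(t) \equiv p(0) \in Q$ for all $t$, and $x(t) = \phi_H^t(x(0))$ is a trajectory of $H$. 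If $(x(T), p(T)) \in L \times Q$ for some $T > 0$, then in particular $x(T) \in L$, so the admissibility of $H$ for $L$ forces $T > 1$ or $x(t)$ to be constant, which in turn means $(x(t), p(t))$ is constant.

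I expect the only real subtlety to be this last verification of the trajectory condition, which hinges on the fact that $d\chi$ vanishes along $Q$, so Hamiltonian trajectories starting on $L \times Q$ cannot escape the slice $X \times \{p(0)\}$. The compactness of $Q$ is essential for the existence of a suitable cutoff $\chi$ with $\chi \equiv 1$ on a neighborhood of $Q$ and compact support. Any auxiliary conditions in the unmodified admissibility (for instance, concerning the zero set of $H$ meeting $L$) transfer directly to $\tilde H$ and $L \times Q$ by the same product construction, since $\tilde H$ vanishes wherever either $H$ or $\chi$ vanishes.
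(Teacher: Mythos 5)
Your proof is correct and follows essentially the same route as the paper: extend $H$ to the product via a cutoff in the $P$-factor that equals $1$ near $Q$, so that Hamiltonian chords relative to $L \times Q$ reduce to chords of $H$ relative to $L$ and all admissibility conditions transfer. The only differences are the normalization of the cutoff (you take $\chi\, H$ where the paper takes $(H - m(H))\beta + m(H)$) and that your explicit computation of $X_{\tilde H}$, showing $\dot p \equiv 0$ along trajectories starting on $L \times Q$, makes precise a step the paper leaves implicit.
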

\begin{proof}

Let $H \colon X \to \R$ be an HZ-admissible Hamiltonian with respect to $(X, L)$, 
and let $m(H) = || H || = \max H$. 
Let $\beta \colon P \to [0,1]$ be a cut-off function with 
$\beta \equiv 1$ in a neighbourhood of $Q$ and with compact support in $P$.

Let $\pi_X, \pi_P \colon X \times P$ denote the projections to $X$ and $P$ respectively, and by slight abuse of notation 
write $H= \pi_X^*H$ and $\beta = \pi_P^*\beta$.

Consider now the function $\tilde H \colon X \times P \to \R$ by \[
\tilde H = ( H - m(H) )\beta  + m(H).
\]

Observe that $\tilde H = H$ in a neighbourhood of $L \times Q \subset X \times
P$. The Lagrangian chords of $\tilde H$ relative to $L \times Q$ therefore 
take the form $t \mapsto (x(t), p)$ where $x$ is a chord of $H$ relative to $L$
and $p \in Q$.
It therefore follows that $\tilde H$ is HZ-admissible with respect to $(L \times Q, X \times P)$. 
Furthermore, $|| \tilde H || = m(H)$ and the result now follows.
\end{proof}

From this, we obtain the following
\begin{lem}
    For $\epsilon > 0$, let $D^2(\epsilon)$ denote the disk in $\mathbb{R}^2$ of area of $\epsilon$ and let $S_\epsilon$ be its boundary.  

For each $\epsilon > 0$, let \[
    L_\epsilon = S_\epsilon \times Q \subset D^2( 3 \epsilon ) \times T^*Q.
\]

Then, $L_\epsilon$ is a monotone Lagrangian of minimal Maslov class $2$, and has displacement energy $e(L_\epsilon) = \epsilon$.

For any open neighbourhood $U$ of the zero-section $Q \subset U \subset T^*Q$, $c(Q, U) \leq c( L_\epsilon, D^2(2\epsilon) \times U)$ by Lemma \ref{lem:Product ineq}, so in particular $c(L_\epsilon, D^2(2\epsilon) \times U)$ is bounded away from $0$.

Furthermore, $e(L_\epsilon, D^2(2\epsilon) \times U) \le 4 \epsilon$. 
\qed
\end{lem}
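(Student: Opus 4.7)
The lemma collects four separate claims about the family $L_\epsilon$, and the plan is to verify each in turn using the product structure of $L_\epsilon$ and the contractibility of $T^*Q$ onto $Q$. The monotonicity and Maslov assertions are structural; the displacement bounds and capacity lower bound carry the geometric content.

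For monotonicity and minimal Maslov number, the symplectic area and Maslov index are additive with respect to the product $D^2(3\epsilon) \times T^*Q$, so it suffices to compute them on each factor separately. The long exact sequence of the pair $(T^*Q, Q)$ combined with the deformation retraction $T^*Q \simeq Q$ gives $\pi_2(T^*Q, Q) = 0$, so the $T^*Q$-contribution vanishes. The generator of $\pi_2(D^2(3\epsilon), S_\epsilon) \cong \mathbb{Z}$ is the interior disk bounded by $S_\epsilon$, with symplectic area $\epsilon$ and Maslov index $2$. Hence $L_\epsilon$ is monotone with ratio $\epsilon/2$ and minimal Maslov number $2$.

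For the displacement energies, the bound $e(L_\epsilon) \geq \epsilon$ follows from Chekanov's theorem, since the minimal pseudo-holomorphic disk area on $L_\epsilon$ equals $\epsilon$ by the Maslov computation above. The matching upper bound $e(L_\epsilon) \leq \epsilon$ is obtained by displacing the closed disk $\overline{D^2(\epsilon)}$ inside $D^2(3\epsilon)$: the annular region of area $2\epsilon$ provides enough room to slide this inner disk off itself with Hofer norm approaching $\epsilon$, and pulling the resulting Hamiltonian back to the product by the first-factor projection displaces $L_\epsilon$ at the same cost. For the looser bound $e(L_\epsilon, D^2(2\epsilon) \times U) \leq 4\epsilon$, I would take a compactly supported Hamiltonian on $D^2(2\epsilon)$ of the form $H(x, y) = f(x)\chi(x, y)$, where $\chi$ is a cutoff vanishing near $\partial D^2(2\epsilon)$ and $f$ is chosen so that $|f'(x)|$ exceeds the diameter $2\sqrt{\epsilon/\pi}$ of $S_\epsilon$ on the vertical slab containing it. A direct check shows that the time-$1$ flow acts approximately by $(x, y) \mapsto (x, y + f'(x))$ and displaces $S_\epsilon$, while $\max H - \min H$ stays well below $4\epsilon$; pullback to the product then displaces $L_\epsilon$.

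The capacity inequality $c(Q, U) \leq c(L_\epsilon, D^2(2\epsilon) \times U)$ is a direct application of Lemma \ref{lem:Product ineq} with the roles of the two Lagrangian factors exchanged, and $c(Q, U) > 0$ follows from the standard radial-bump construction in a Weinstein tubular neighborhood of $Q$ contained in $U$; crucially this positive lower bound is independent of $\epsilon$, while the displacement energy in the same space goes to $0$ with $\epsilon$. I expect the main technical subtlety to be the sharp upper bound $e(L_\epsilon) \leq \epsilon$ in $D^2(3\epsilon) \times T^*Q$, which requires a careful Hamiltonian construction that exploits essentially all of the available $2\epsilon$ of annular area; the other three claims reduce to Lemma \ref{lem:Product ineq} and to routine explicit Hamiltonian estimates.
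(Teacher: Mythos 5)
The paper gives no written proof of this lemma (it is stated with a \qed), so the comparison is against the standard arguments it leaves implicit. Your treatment of the first three claims is essentially that intended argument: the Maslov/area computation via $\pi_2(T^*Q,Q)=0$ and the generator of $\pi_2(D^2(3\epsilon),S_\epsilon)$, the lower bound $e(L_\epsilon)\ge\epsilon$ from Chekanov via the minimal disk area, the upper bound by displacing the inner disk in the annulus and cutting off in the $T^*Q$ factor, and the capacity bound by applying Lemma \ref{lem:Product ineq} with the factors exchanged together with the $\epsilon$-independent ball bound for $c(Q,U)$. These parts are fine.

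Your argument for the fourth claim, however, has a genuine gap. You propose a Hamiltonian $H=f(x)\chi(x,y)$ \emph{compactly supported in} $D^2(2\epsilon)$ whose time-one map is approximately the shear $(x,y)\mapsto(x,y+f'(x))$ with $|f'|$ at least the diameter of $S_\epsilon$. This cannot work: the flow of a Hamiltonian supported in $D^2(2\epsilon)$ preserves $D^2(2\epsilon)$, so it can never move $L_\epsilon=S_\epsilon\times Q$ off the set $D^2(2\epsilon)\times U$, which is what $e(L_\epsilon, D^2(2\epsilon)\times U)$ measures (see the following Proposition: ``the energy to displace $L_\epsilon$ from $U_\epsilon$''). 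Nor can such a flow displace $S_\epsilon$ from \emph{itself} inside $D^2(2\epsilon)$: the image is an embedded curve enclosing area exactly $\epsilon$, and any embedded curve in $D^2(2\epsilon)$ disjoint from $S_\epsilon$ encloses area either strictly less than $\epsilon$ (if it lies in $D^2(\epsilon)$ or is contractible in the annulus) or strictly greater than $\epsilon$ (if it surrounds $\overline{D^2(\epsilon)}$). Concretely, your shear would push the top of $S_\epsilon$ to height $3\sqrt{\epsilon/\pi}$, well outside $D^2(2\epsilon)$ (radius $\sqrt{2\epsilon/\pi}$), so the cutoff $\chi$ necessarily bends the trajectories and the displacement fails. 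The correct construction is to work in the larger ambient plane (ultimately $\C\subset\C^3$ after the embedding discussed below the lemma): displace the whole disk $D^2(2\epsilon)$ from itself in $\R^2$ at Hofer cost $2\epsilon+\delta\le 4\epsilon$, multiply by a cutoff $\beta$ on $T^*Q$ equal to $1$ near $Q$ as in the proof of Lemma \ref{lem:Product ineq}; the resulting flow carries $S_\epsilon\times Q$ to $\psi(S_\epsilon)\times Q$ with $\psi(S_\epsilon)\cap D^2(2\epsilon)=\emptyset$, giving the stated bound.
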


Now observe that in $\C^3$, there is an isotropic embedding of $S^2$ (e.g., take the real axis $\R^3$ and intersect with the unit sphere). The isotropic neighbourhood theorem gives a symplectic embedding of $D^2(a) \times U$ for some $a > 0$
and $U$ a neighbourhood of the 0-section in $T^*S^2$. This embedding can be made exact, and thus $L_\epsilon = S_\epsilon \times S^2$ 
embeds as a monotone Lagrangian in $\C^3$ for $\epsilon < a$. 

This then gives:
\begin{prop}
There exists a sequence of closed, monotone Lagrangians $L_\epsilon$, with minimal Maslov number $2$, and neighbourhoods $U_\epsilon$ in $\C^3$ with the following properties:
\begin{enumerate}
\item the relative capacities $c(L_\epsilon, U_\epsilon)$ are bounded away from $0$;
\item the displacement energy $d(L_\epsilon)$ and the energy to displace $L_\epsilon$ from $U_\epsilon$, $d(L_\epsilon, U_\epsilon)$ both have order $\epsilon$ as $\epsilon$ goes to $0$.
\end{enumerate}
\qed
\end{prop}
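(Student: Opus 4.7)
The plan is to assemble the ingredients set up in the paragraph preceding the statement. Let $\Phi\colon D^2(a) \times U \hookrightarrow \C^3$ be the (exact) symplectic embedding produced by the isotropic neighbourhood theorem applied to the round sphere $S^2 \subset \R^3 \subset \C^3$, where $U$ is an open neighbourhood of the zero-section in $T^*S^2$. For $\epsilon \in (0, a/3)$, I would set
\[
    L_\epsilon := \Phi(S_\epsilon \times S^2), \qquad U_\epsilon := \Phi(D^2(2\epsilon) \times U).
\]

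For the topological conditions, the homotopy long exact sequence of $(\C^3, L_\epsilon)$ together with $\pi_1(\C^3) = \pi_2(\C^3) = 0$ yields $\pi_2(\C^3, L_\epsilon) \cong \pi_1(L_\epsilon) = \Z$, with generator bounded by a circle $S_\epsilon \times \{p\}$. This class is represented by $\Phi(D^2(\epsilon) \times \{p\})$, which has symplectic area $\epsilon$ and Maslov index $2$; hence $L_\epsilon$ is monotone with minimal Maslov number $2$. For the capacity lower bound, applying Lemma \ref{lem:Product ineq} with $(X, L) = (U, S^2)$ and $(P, Q) = (D^2(2\epsilon), S_\epsilon)$ gives $c(S^2, U) \le c(L_\epsilon, D^2(2\epsilon) \times U)$, and transporting by the symplectic embedding $\Phi$ yields $c(L_\epsilon, U_\epsilon) \ge c(S^2, U)$. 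The right-hand side is strictly positive and independent of $\epsilon$, as witnessed by any HZ-admissible autonomous Hamiltonian supported in $U$, attaining its maximum on a neighbourhood of $S^2$, and whose time-$1$ flow has no non-constant Lagrangian chord with endpoints on $S^2$.

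Finally, $S_\epsilon$ is displaceable in $D^2(2\epsilon)$ by a compactly supported Hamiltonian of norm $O(\epsilon)$. Extending this trivially in the $U$-direction by a cutoff and pushing forward by $\Phi$ produces a compactly supported Hamiltonian on $\C^3$ that displaces $L_\epsilon$ from $U_\epsilon$; this gives $d(L_\epsilon, U_\epsilon) = O(\epsilon)$ and in particular $d(L_\epsilon) = O(\epsilon)$. The only step requiring genuine input is the strict positivity of $c(S^2, U)$, which is a standard lower-bound construction for the relative Hofer-Zehnder capacity of a closed Lagrangian inside a neighbourhood in its cotangent bundle and is independent of $\epsilon$; everything else is formal from the preceding lemma and the embedding data.
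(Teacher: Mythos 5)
Your argument is correct and is essentially the paper's own: the proposition is simply the assembly of Lemma \ref{lem:Product ineq}, the displacement estimate for $S_\epsilon \times S^2 \subset D^2(3\epsilon) \times T^*S^2$, and the exact isotropic-neighbourhood embedding into $\C^3$, which is all the paper itself supplies. One cosmetic caveat: a Hamiltonian attaining its maximum on an entire neighbourhood of $S^2$ restricts to a constant function on $S^2$ and therefore cannot satisfy the two-critical-value admissibility condition, so the positivity of $c(S^2,U)$ should be quoted from the ball lower bound of \cite{Lisi_Rieser_2020} rather than from the witness you describe.
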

In other words, there is no possible energy-capacity inequality (even with a constant) 
in general for closed Lagrangians in $\C^3$.

Notice furthermore that for $\epsilon$ small, we can symplectically embed a neighbourhood of
$L_\epsilon \subset \C^3$ in a Darboux chart in, for instance, $\mathbb{CP}^3$. The
bounds on displacement energy and relative capacity remain valid, so the
compactness/non-compactness of the symplectic manifold doesn't matter for the
result.

\section{Filtered Floer complexes and persistence modules}

\subsection{The Lagrangian pearl complex and Lagrangian Floer homology}

In this section will recall the key features of both the Lagrangian pearl complex and of
Lagrangian Floer homology, following the sign conventions and notation from
Biran and Cornea \citelist{\cite{Biran_Cornea_pp_2007}
\cite{Biran_Cornea_new_persp}}. 
The Lagrangian pearl complex was introduced by
Oh \cite{Oh_1996} and subsequently developed extensively by Biran and Cornea 
\citelist{\cite{Biran_Cornea_pp_2007}, \cite{Biran_Cornea_new_persp}}.

The exposition in this section follows 
\cite{Biran_Cornea_new_persp}*{Sections 2.1, 2.2, 2.3} and, 
for the comparison with Floer homology, the discussion in
\cite{Biran_Cornea_pp_2007}*{Section 5.6}. Finally, the discussion of actions
is addressed in \cite{Biran_Cornea_pp_2007}*{Section 2.3}. 
We include it here for convenience of the reader, and so that our discussion remains self-contained.

Let $(W, \omega)$ be a connected, geometrically bounded symplectic manifold
(see, for instance, \cite{Audin_Lalonde_Polterovich_1994}*{Definition 2.2.6})
Let $L \hookrightarrow W$ be an closed, embedded, connected Lagrangian
submanifold, and suppose that $L$ is (spherically) monotone, i.e. that there exists a constant $\tau > 0$ so that 
for all disks $D \in \pi_2(W, L)$, $\omega(D) = \tau \mu(D)$, where $\mu(D)$ is the Maslov index
of the disk $D$. Let $N_L$ be the minimal Maslov index of a disk with positive
symplectic area. We assume furthermore that $N_L \ge 2$. 

For notational purposes, let $\bar \mu(A) \coloneqq \mu(A)/N_L$. 

\subsubsection{Lagrangian pearl complex}

Let $f \colon L \to \R$ be a Morse function and choose a metric $g$ on $L$ such
that $(f,g)$ are a Morse-Smale pair. Let $\nabla f$ denote the gradient of $f$
with respect to $g$. 
Let $\Phi_t$ denote the flow of $-\nabla f$. The unstable manifold of a
critical point $x$ with respect to $-\nabla f$ will be called the descending
manifold of $x$. Similarly, the stable manifold is called the ascending manifold. 
For the critical point $x$, we write $\Morse(x)$ for its
Morse index (i.e.~the dimension of the descending manifold of $x$).

The \defin{Quantum Homology Complex} (or \defin{pearl complex}) is generated by the critical
points of $f$ and is taken with coefficients in the Laurent polynomial 
ring $\Z_2[t^{-1}, t]$. The formal variable $t$ is graded by $|t| = -N_L$ and
critical points $p \in \Crit{f}$ are graded by $|p| = \Morse(p)$.
From this, the grading on monomials in the pearl complex is given by $|pt^r| = \Morse(p) - rN_L$.

Fix an almost complex structure $J$ on $W$ so that $(W, \omega, J)$ is
geometrically bounded. Then, a holomorphic disk $u \colon (D, \partial D) \to (W,L)$ is a map
from the unit disk in $\C$ so that $du + Jdu\circ i = 0$ and $u(\partial D) \subset L$.

For any two critical points $x, y \in \Crit{f}$, and relative homology
class $A \in H_2(W, L)$ in the image of $\pi_2(W,L)$, define a \defin{pearly trajectory} with $l \ge 1$
pearls from $x$ to $y$ representing the class $A$ to 
be the collection of $l$ holomorphic disks $u_1, u_2, \dots, u_l$ with
the following properties:
\begin{enumerate}
    \item $u_1(-1)$ is in the descending manifold of $x$;
    \item for each $1 \le i \le l-1$, there exists a positive $t_i$ so that 
        $\Phi_{t_i}(u_i(1)) = u_{i+1}(-1)$;
    \item $u_l(1)$ is in the ascending manifold of $y$.
\end{enumerate}
Each of the non-constant disks has an $\R$ action by domain automorphisms,
and thus such a configuration with $l$ pearls has 
an $\R^l$ action by domain automorphisms. We consider these configurations
modulo this action.

This problem is Fredholm, and modulo automorphisms, the Fredholm index
(virtual dimension) is given by 
\[
    \Morse(x) - \Morse(y) + \mu(A) -1.
\]
(See, for instance, \cite{Biran_Cornea_pp_2007}*{Propositions 3.1.2, 3.1.3} where this
is stated, but it follows from the Fredholm theory in
\cite{McDuff_Salamon_2004}*{Appendix C}.)

For $A=0$, we consider pearly trajectories from $x$ to $y$ to be the
negative gradient trajectories modulo its $\R$-action. This then has the same
virtual dimension formula.

The 0 and 1 dimensional moduli spaces are transverse for generic choice of data
\cite{Biran_Cornea_pp_2007}*{Section 3}.
Let $n(x, y, A)$ be the number of pearly trajectories from $x$ to $y$
representing the class $A$ (taken modulo $2$) if 
\[ \Morse(x) - \Morse(y) + \mu(A) -1 = 0 \]
and define the number to be $0$ otherwise. Then, define:
\[
    \partial x = \sum_{A \in \pi_2(W,L)} \sum_{y\in \Crit{f}}  n(x,y,A) y
    t^{\bar \mu(A)},
\]
and extend $\partial$ linearly over the coefficient ring $\Z_2[t^{-1}, t]$.
This satisfies $\partial^2 = 0$ and decreases the degree by $1$.

We denote the resulting chain complex by $CQ(L)$. Despite this notation
(chosen for brevity), it depends on the almost complex structure, 
Morse function and gradient-like vector field used to define it.
The homology of this complex is denoted by $QL(L)$, and different choices of
data give isomorphic homologies (\cite{Biran_Cornea_pp_2007}*{Proposition
5.1.2}).

The pearl complex may be informally thought of as the Floer complex associated to a constant Hamiltonian. 
Inspired by this point of view, for each choice of constant $c\in \R$,
we define 
the \defin{action} $\A_c$ of the generators of the complex as follows:
\[
\A_c(xt^r) =c -\tau r N_L.
\]
Notice that if 
$\partial x = \dots + yt^r + \dots$, then a pearly trajectory connects $x$ and
$y$, representing some class $A$ with $\mu(A) = rN_L$. By monotonicity,
$\omega(A) = \tau r N_L = -\A_c(yt^r) + \A_c(x) \ge 0$, and so the differential
does not increase action. (Equality of action occurs only when $r=0$, i.e.~when
the pearly trajectory is a pure Morse trajectory without any holomorphic disks.) 
This action filtration now allows us to consider the homologies of subcomplexes
filtered by action.

\subsubsection{Floer homology}

As above, we take $L \hookrightarrow W$ to be a monotone Lagrangian with minimal Maslov
number $2$. 
Fix a time-dependent Hamiltonian function $H \colon S^1 \times W \to \R$, 
so that $dH$ is compactly supported. 
Let $J_t, t \in S^1$ be an $S^1$-family of almost complex structures on $W$, independent
of $t$ outside a compact set, and making $(W, J_t)$ geometrically bounded for
each $t$. Define the time dependent Hamiltonian vector field $X_{H_t}$ by the equation:
\[
    \omega(X_{H_t}, \cdot) = - dH_t(\cdot).
\]

We consider the chords of $X_{H_t}$: these are trajectories $\gamma \colon [0,
1] \to W$ with $\gamma(0) \in L, \gamma(1) \in L$ and so that $\dot \gamma(t)
= X_{H_t}(\gamma(t))$. For generic $H$, all such chords are non-degenerate, i.e. if $\psi_t$ is the flow of
$X_{H_t}$, $T\psi_1(\gamma(0)) T_{\gamma(0)}L$ intersects $T_{\gamma(1)}L$
transversely. In particular, the chords are isolated and there are a finite number of them.

For each chord $\gamma$, fix a capping disk
$D(\gamma)$: this is a disk in $W$ whose boundary consists of two arcs: one along $L$ and the other following $\gamma$, oriented positively. Notice that $\pi_2(W, L)$ acts on the space of cappings by taking a connected sum, and 
the difference between two cappings is an element of $\pi_2(W, L)$.
In the case of non-degenate chords, the 
pair $x = (\gamma, D(\gamma))$ has a well-defined (integer-valued) Maslov index 
{which we denote $\mu(\gamma,D(\gamma))$}
\citelist{\cite{Viterbo_1987}
\cite{Robbin_Salamon_1993}}.

The Floer complex of $H$, assuming that all of the chords are non-degenerate, is generated by these capped orbits over the Laurent
polynomial ring $\Z_2[t^{-1}, t]$. The formal
$t$ variable then captures the action of $\pi_2(W, L)$ on the capping disks.

We define $|xt^r| = \mu( \gamma, D(\gamma) ) - rN_L$, and in particular, $|x| = \mu(\gamma,D(\gamma))$.
We may think of the monomial $xt^r$ as representing the orbit $\gamma$ capped by 
the connect sum of the reference capping disk $D(\gamma)$ with a disk 
with Maslov index $rN_L$. (To make this precise, we consider equivalence
classes of capping disks, where two disks are equivalent if they have the same
Maslov index.)

For a pair of capped orbits $(\gamma_-, D(\gamma_-))$ and $(\gamma_+,
D(\gamma_+))$, we consider solutions $u \colon \R \times [0, 1] \to W$ 
to the Floer equation:
\begin{equation}\label{eq:CR equation}
    u_s + J(t, u)(u_t - X_{H_t}(u)) = 0,
\end{equation}
satisfying the boundary conditions $u(s, 0) \in L, u(s, 1) \in L$, and the
asymptotic boundary conditions $u(+\infty, t) = \gamma_+(t)$, $u(-\infty, t) =
\gamma_-(t)$.

Note that this strip gives a rectangle whose boundary consists of two arcs on
$L$ together with arcs along $\gamma_-$ and $\gamma_+$, where $\gamma_+$ is
oriented positively and $\gamma_-$ is oriented negatively with respect to the orientation of the strip. We then may glue
this strip to the cap $D(\gamma_-)$ and to the reverse cap
 $-D(\gamma_+)$ to obtain a representative of the
homology class $A \in \pi_2(W, L)$, where $A$ is
\[
    A = D(\gamma_-) \# [u] \# ( - D(\gamma_+) ).
\]
Equivalently, we may think of this as
inducing a map from the cap $D(\gamma_-)$ to a new cap on $\gamma_+$ which is obtained as the connected sum of $A$ with the old cap $D(\gamma_+$) on $\gamma_+$.
In this way, we say that the strip $u$ represents the homology class $A$.

Solutions of Equation \ref{eq:CR equation} have a one-parameter family of domain automorphisms (by domain
translations in the $s \in \R$ coordinate). 
The Fredholm index of this problem is given by 
\begin{equation} \label{eqn:Fredholm index}
\mu(\gamma_-, D_-) - \mu(\gamma_+, D_+) + \mu(A) = |x| - |y t^{\bar
	\mu(A)}|. 
\end{equation}
We are interested in solutions that are rigid 
modulo this translation, i.e. Fredholm index one.

In this monotone case, 
for generic choice of data, i.e.~of (potentially time-dependent) almost complex structure $J$, 
and of (potentially time-dependent) Hamiltonian $H$, 
the low dimensional moduli spaces of solutions to the Floer equation \eqref{eq:CR equation}
are cut out transversely. (The 1-dimensional moduli spaces are used to define
the differential and the 2-dimensional ones are used to show that $\partial_H^2
= 0$. We will also use this fact with $s$-dependent Hamiltonians in \eqref{eq:CR
equation} to construct chain maps.)
These moduli spaces are compact manifolds with boundary, of dimension given by
the Fredholm index of Equation \eqref{eqn:Fredholm index}.
(See, for instance, \cite{Oh_1993}.)

From this, we proceed to define a differential. First, we let $n(x, y, A)$ be
the number of rigid solutions (modulo the $\R$ family of automorphisms)
representing the class $A$ if the above index is $1$, 
and setting $n(x,y, A)$ to be $0$ otherwise. We now define the differential $\partial_H:... \to ...$ by 
\[
    \partial_H x = \sum_A \sum_{y} n(x,y,A) yt^{\bar \mu(A)}. 
\]
For generic data, standard arguments \cite{Oh_1993} give that
 $\partial_H^2 = 0$ and that the differential has
degree $-1$.

We denote the resulting chain complex by $(CF(L;H),\partial_H)$, suppressing the dependence
on the almost complex structure $J$ and on the choices of capping disks for
the generators.

The \defin{action} of a capped chord $x=(\gamma, D(\gamma))$ is defined as follows:
\[
    \A_H(x) = \A_H(\gamma, D(\gamma)) = \int_{0}^1 H(t, \gamma(t)) \, dt -
    \int_{D(\gamma)} \omega.
\]
We then extend this notion to monomials in our chain complex, where $x =
(\gamma, D(\gamma))$: 
\[
    \A_H(xt^r) = \A_H(\gamma, D(\gamma)) - r \tau  N_L.
\]

Finally, we make the convention that, for an arbitrary element of the chain complex, \[
\A_H \left ( \sum_{j=1}^N x_j t^{r_j} \right ) = \max
\left \{ \A_H( x_j t^{r_j}) \, | \, \quad j = 1, \dots, N \right \} 
\]
where the monomials in the sum are distinct (i.e.~$x_j
t^{r_j} = x_l t^{r_l}$ if and only if $j = l$).

We will now recall the standard action estimate. While it is not necessary at
this point in the exposition, we will consider the more general case of a
Hamiltonian that depends not only on {$t \in S^1$} but also on a variable {$s\in \R$}.
We write $dH_{s,t}$ to denote the exterior
derivative of $H(s,t, \cdot) \colon W \to \R$ so that $dH = H_s ds + H_t dt +
dH_{s,t}$, where $H_s$ and $H_t$ denote the partial derivatives of $H$ with respect to $s$ and $t$, respectively.
We abbreviate here $S = \R
\times [0,1]$, and take $(s,t) \in S$ with $s \in \R$ and $t \in [0,1]$.  We also suppose that the dependence of $H$ on $s$ is only in a compact subset of $S$.
 Let $H_\pm \colon [0,1] \times W \to \R$ denote the corresponding limits as
$s\to \pm \infty$.
\begin{equation} 
\begin{aligned}\label{eqn:action estimate}
    0 <& \int_{S} |u_s|^2 \, ds \wedge dt 
    =  \int_S \omega( u_s, Ju_s) ds \wedge dt \\
    =& \int_S \omega(u_s, u_t - X_{H_{s,t}}(u)) \, ds\wedge dt\\ 
    =& \int_S u^*\omega - \int \omega(u_s, X_{H_{s,t}}(u)) \, ds \wedge dt \\
    =& \int_S u^*\omega - \int_S dH_{s,t}[u_s] ds \wedge dt \\
    =& \int_S u^*\omega - \int_S u^*(dH - \partial_s H ds) \wedge dt \\
    =& \int_S u^*\omega - \int_S u^*dH \wedge dt + \int_S  (\partial_s H)(s,t,u(s,t)) ds \wedge dt \\ 
    =& \int_S u^*\omega - \int_{[0,1]} H_+(t, \gamma_+(t))\,dt  + \int_{[0,1]}
    H_-(t, \gamma_-(t)) \, dt \\
     &+ \int_S  (\partial_s H)(s,t,u(s,t)) ds \wedge dt   \\
    =& \int_S u^*\omega - \A_{H_+}(\gamma_+, D_+) - \int_{D_+} \omega +
    \A_{H_-}(\gamma_-, D_-) + \int_{D_-} \omega \\
    &+ \int_S  (\partial_s H)(s,t,u(s,t)) ds \wedge dt   \\ 
    =& \;\omega(A) - \A_{H_+}(\gamma_+, D_+) + \A_{H_-}(\gamma_-, D_-) +\\
     &+ \int_S \partial_s H(s,t,u(s,t)) ds \wedge dt.
\end{aligned}
\end{equation}

In particular then, if $\partial_H x = \dots + yt^r + \dots$, then there exists a strip representing the class $A$ going from $x = (\gamma_-,
D_-)$ at $-\infty$ to $y = (\gamma_+, D_+)$ at $+\infty$ with $\mu(A) = rN_L$. 
Thus, $ 0 < \omega(A) - \A_H(\gamma_+, D_+) + \A_H(\gamma_-, D_-) 
= \tau r N_L - \A_H(y) + \A_H(x) = \A_H(x) - \A_H(yt^r).$ The differential therefore decreases action.

In the context of the Quantum Homology Complex (aka the Pearl complex), 
 it will be useful to consider $\A_0$, the
action associated to the identically zero Hamiltonian, or more generally,
$\A_c$, associated to a constant Hamiltonian.

Given two (non-degenerate) Hamiltonians $H_1, H_2$, we may define a chain map
between their Floer chain complexes by considering a continuation map, where we
count solutions to the Floer equation with an $s$-dependent Hamiltonian that
interpolates between $H_1$ and $H_2$. Note that, for the differential on our chain complex, we have that $H_s = 0$.
In general, we say that a homotopy is a \defin{monotone homotopy} if $H_s \le 0$ pointwise.
By the action estimate \eqref{eqn:action estimate}, 
this continuation map will be action non-increasing if we have a monotone
homotopy (i.e.~$H_s \le 0$ pointwise).
More generally, the action estimate can also be applied to a linear
interpolation between the two Hamiltonians. 
In this case, we set $H = (1-\rho(s)) H_- + \rho(s) H_+$, 
where $\rho \colon \R \to [0,1]$ has and $\rho'$ is non-negative and compactly supported. 
Then, $H_s = \rho'(s) \left ( H_+ - H_- \right )$. 
This more general PSS-type map will thus increase action by at most
\begin{equation} \label{eqn:Eplus}
    \E_+(H_+ - H_-) = \int_{0}^1 \sup \{ H_+(t, x) - H_-(t, x) \, | \, x \in W \} \, dt.
\end{equation}

\subsubsection{Chain maps and consequences}

We now introduce two important chain maps, as discussed in
\cite{Biran_Cornea_pp_2007}*{Proposition 5.6.2}:
\begin{equation} \label{eqn:chain maps Psi and Phi}
\begin{aligned}
    \Psi \colon CQ(L) \to CF(L;H) \\
    \Phi \colon CF(L; H) \to CQ(L).
\end{aligned}
\end{equation}

These are obtained by counting rigid configurations of PSS-type mixed
holomorphic and Floer solutions.  
These two maps are chain maps, and their compositions are chain homotopic to
the identity.

Let $CQ^{\le a}$ denote the subcomplex whose generators have $\A_0$ action at
most $a$. 
Let $CF^{\le a}$ denote the subcomplex of action at most $a$.
By the action estimate \eqref{eqn:action estimate}, any PSS-type map will be
action non-increasing \textit{for a monotone homotopy} (i.e.~when $H_s \le 0$
pointwise).
Hence, whenever the
Hamiltonian satisfies $H \le c_1$ pointwise, we have that 
$\Psi \colon CQ(L) \to CF(L; H)$ is action non-increasing when the action on
$CQ$ is taken to be $\A_{c_1}$. 
Indeed, in that case, we may think of $CQ(L)$
as the Morse-Bott Floer complex for the constant Hamiltonian $c_1$. 

If, instead, $H \ge c_2$ pointwise, we have that $\Phi \colon CF(L; H) \to CQ(L)$
is action non-increasing when $CQ$ is equipped with the action $\A_{c_2}$.

Combining these observations, for any Hamiltonian $H$ with $c_1 \le H \le c_2$, we
obtain that 
\begin{equation} \label{eqn:action estimate chain maps}
    CQ^{\le a}(L) \stackrel{\Psi}{\to} CF^{\le a+c_2}(L;H) \stackrel{\Phi}{\to} CQ^{\le a+c_2 - c_1}(L)
\end{equation}
is chain homotopic to the inclusion $CQ^{\le a} \to CQ^{\le a+c_2 - c_1}$. From this, we obtain a version of Chekanov's theorem:
\begin{prop}[\cite{Biran_Cornea_pp_2007}*{Remark 6.6.3}] \label{prop:chekanov}
    If $L$ is displaceable with displacement energy $d(L)$ then the inclusion $CQ^{\le a}
    \to CQ^{\le a + d(L)}$ is chain homotopic to $0$. 

    In particular, if the Morse function on $L$ has a unique local maximum, $M \in
    \Crit(f)$, then $M = \partial \sum_{i=1}^l x_i t^{r_i}$ where 
    $\A_0(x_i t^{r_i}) \le d(L)$.
\end{prop}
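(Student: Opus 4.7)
My plan is to combine the PSS-type comparison between the pearl complex and Floer homology with the vanishing of Floer homology for a displaced Lagrangian, and then to evaluate the resulting chain null-homotopy on $M$. For the first statement, fix $\epsilon > 0$ and choose a compactly supported, time-dependent Hamiltonian $H_\epsilon$ whose time-$1$ flow displaces $L$ and whose Hofer norm $\int_0^1 (\max_x H_\epsilon(t,x) - \min_x H_\epsilon(t,x))\,dt$ is strictly less than $d(L)+\epsilon$. Since $\phi_{H_\epsilon}^1(L)\cap L = \emptyset$, no Hamiltonian chord of $H_\epsilon$ has both endpoints on $L$, so $CF(L;H_\epsilon) = 0$.

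I invoke the PSS-type chain maps $\Psi$ and $\Phi$ of \eqref{eqn:chain maps Psi and Phi}, whose composition is chain homotopic to $\Id_{CQ(L)}$. Applying the action estimate \eqref{eqn:action estimate} in its $\E_+$ form \eqref{eqn:Eplus} to the Floer-strip portions of these PSS configurations (with Hamiltonian interpolating linearly between $0$ and $H_\epsilon$), $\Psi$ shifts action by at most $\E_+(H_\epsilon) = \int_0^1 \max_x H_\epsilon \, dt$ and $\Phi$ by at most $\E_+(-H_\epsilon) = -\int_0^1 \min_x H_\epsilon \, dt$, summing to at most the Hofer norm of $H_\epsilon$, hence less than $d(L) + \epsilon$. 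Thus $\Phi \circ \Psi \colon CQ^{\le a}(L) \to CQ^{\le a+d(L)+\epsilon}(L)$ is chain homotopic to the inclusion. Since $CF(L;H_\epsilon) = 0$ forces $\Phi\circ\Psi = 0$, this inclusion is chain null-homotopic. The values of $\A_0$ on generators of $CQ(L)$ lie in the discrete set $\tau N_L \Z$, so for $\epsilon$ small enough $CQ^{\le a+d(L)+\epsilon} = CQ^{\le a+d(L)}$, yielding the first claim.

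For the claim about $M$, take $a = 0$ and let $K \colon CQ^{\le 0}(L) \to CQ^{\le d(L)}(L)$ be the null-homotopy, so $\partial K + K\partial = \text{inclusion}$. Evaluating on $M$ gives $\partial K(M) + K(\partial M) = M$, and the result follows once $\partial M = 0$. Every summand $yt^r$ of $\partial M$ satisfies $\A_0(yt^r) \le \A_0(M) = 0$, so $r \ge 0$, while the degree condition $|yt^r| = n-1$ forces $\Morse(y) = n-1 + rN_L$; since $\Morse(y) \le n$ and $N_L \ge 2$, only $r = 0$ is possible, and $\partial M$ reduces to the pure Morse differential of $M$ valued in Morse index $n-1$ critical points. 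Since $M$ is the unique degree-$n$ generator of the Morse complex and $H_n(L;\Z_2) = \Z_2$ is nonzero for the closed manifold $L$, $M$ must be a Morse cycle, so $\partial M = 0$. Hence $M = \partial K(M)$, and writing $K(M) = \sum_i x_i t^{r_i} \in CQ^{\le d(L)}$ gives the claimed expression with $\A_0(x_i t^{r_i}) \le d(L)$.

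The main technical obstacle is keeping the action bookkeeping tight: one needs the integrated $\E_+$ version of the action estimate (rather than the pointwise $c_1, c_2$ bounds used in \eqref{eqn:action estimate chain maps}) to match total action shifts against the Hofer norm of the time-dependent $H_\epsilon$, and one must verify that the chain homotopy between $\Id_{CQ}$ and $\Phi \circ \Psi$ respects the same bound. Removing the $\epsilon$ is immediate from the discreteness of the $\A_0$-spectrum enforced by monotonicity of $L$.
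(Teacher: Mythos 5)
Your proof is correct and follows essentially the same route the paper takes (via its citation of Biran--Cornea): the PSS maps $\Psi,\Phi$ of \eqref{eqn:chain maps Psi and Phi} compose to something chain homotopic to the inclusion with action shift bounded by the Hofer norm of a displacing Hamiltonian, the vanishing of $CF(L;H_\epsilon)$ forces that composition to be zero, and evaluating the null-homotopy on $M$ (using $\partial M=0$, which you justify correctly by positivity of disk area, the degree constraint $N_L\ge 2$, and $H_n(L;\Z/2)\ne 0$) gives the second claim. You also correctly flag the two details the paper elides --- the need for the integrated $\E_+$ estimate \eqref{eqn:Eplus} rather than the pointwise constants of \eqref{eqn:action estimate chain maps} to reach the sharp bound $d(L)$, and the fact that the chain homotopy itself obeys the same action bound --- both of which are standard and handled as you indicate.
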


Notice that for degree reasons, the exponents $r_i < 0$ and thus, in particular,
there is a holomorphic disk of area at most $d(L)$ passing through a generic
point.

\subsection{Persistent homology for admissible Hamiltonians}

In the following, we continue to work with our standing assumptions that $L$ is a closed, monotone
Lagrangian embedded in $(W, \omega)$ with minimal Maslov number $N_L \ge 2$, 
and monotonicity constant $\tau > 0$. We
	considering $CF(L; H)$ to be a complex of $\Z/2$ vector spaces with an infinite
	generating set $\{ t^k y \, | \, y \text{ is a chord} \}$.

\subsubsection{The r2p persistence module in the non-degenerate case}

First, suppose that $H$ is a non-degenerate Hamiltonian, and, in particular
then, not admissible. As discussed after Equation \ref{eqn:action estimate}, the differential on the Floer
complex $CF(L; H)$ is action decreasing. We thus define, for $a \in \R$,
$CF^{<a}(H)$ to be the subcomplex of action less than $a$, and similarly
$CF^{\le a}(H)$ to be the subcomplex of action no greater than $a$. 
Associated to an interval $(a,b)$ we then consider the quotient complex
\[
    CF^{(a,b)}(H) = CF^{<b}(H) / CF^{\le a}(H).
\]
Notice that we have suppressed the Lagrangian $L$ from this notation.
While we've defined this as a quotient complex, it is also obtained by
considering the generators whose action is in the ``window'' $(a, b)$. 
We then denote by 
\[
    HF^{(a,b)}(H)
\]
the homology of this complex. 
Let $\Sigma(H)$ denote the action spectrum of the Lagrangian chords of $H$.

Let $a_0 = \tau N_L$ denote the symplectic area of any disk in $\pi_2(W, L)$ whose Maslov number is minimal. 
Multiplication by the monomial $t$ induces an isomorphism 
\begin{equation} \label{eqn:t multiplication}
    t \colon HF^{(a,b)}(H) \to HF^{(a-a_0, b-a_0)}(H)
\end{equation}
that shifts the grading by $-N_L$.
Following Polterovich and Shelukhin \cite{Polterovich_Shelukhin_2016} and their
notation, this gives an r2p persistence module. 

Let $\Intervals(H) = \{ (a, b) \subseteq \R \, | \, a, b \notin \Sigma(H) \}$.
Define a partial order on $\Intervals(H)$ by $(a,b) \preceq (a', b')$ if $a \le
a'$ and $b \le b'$. Then, for two intervals $I_1, I_2 \in \Intervals(H)$, if
$I_1 \preceq I_2$, we have the map induced by inclusion:
\[
    \imath_{I_2, I_1} \colon HF^{I_1} \to HF^{I_2}.
\]
This has the naturality property that if $I_1 \preceq I_2 \preceq I_3$ then 
\[
    \imath_{I_3, I_1} = \imath_{I_3, I_2} \circ \imath_{I_2, I_1}.
\]
Associated to $a< b< c$ with $a, b,c \notin \Sigma(H)$, we have an exact
triangle:
\begin{equation} \label{eqn:exact triangle}
    HF^{(a,b)}(H) \xlongrightarrow{\imath} 
    H^{(a,c)}(H) \xlongrightarrow{\imath} 
    H^{(b, c)}(H) \xlongrightarrow{\delta} 
    HF^{(a,b)}(H)[1] 
\end{equation}
where the bracket $[1]$ indicates that $\delta$ lowers the degree by $1$.
The map $\delta$ can be constructed from the observation that $CF^{I}(H)$ is
generated by the chords whose action is in the window $I$. From this, obtain
that $CF^{(a,c)}(H) \cong CF^{(a,b)}(H) \oplus CF^{(b,c)}(H)$.
The differential on $CF^{(a,c)}(H)$ is upper triangular with respect to this
splitting, and $\delta$ is the map induced by the off-diagonal term (which is
then a chain map). 

An important special case gives that if $a < a' < b' < b$ with $[a, a'] \cap \Sigma(H) = \emptyset = [b', b] \cap \Sigma(H)$,
then (using the triples $a'< b' < b$ and $a<a'<b$)
\begin{equation} \label{eqn:shrink interval}
    HF^{(a, b)}(H) \cong HF^{(a', b')}(H).
\end{equation}

Finally, given two Hamiltonians $H_1, H_2$, we consider the continuation map
from $CF(L; H_1)$ to $CF(L; H_2)$ induced by the linear interpolation between
them.
As a consequence of the action estimates in Equations \ref{eqn:action
estimate} and \ref{eqn:Eplus}, we have that the continuation map gives a map
\begin{equation} \label{eqn:filtered continuation}
    HF^{(a,b)}(H_1) \to HF^{(a+C, b+C)}(H_2)
\end{equation}
where $C = \E_+( H_2 - H_1)$.

\subsubsection{The r2p persistence module in the degenerate case}
We will now construct the filtered homology groups $HF^{(a,b)}(H)$ for the
	admissible Hamiltonian $H$, despite the fact that it is degenerate and hence
	doesn't have a corresponding chain complex.
	In this we are following the ideas from 
	\cite{Polterovich_Shelukhin_2016}.

	Let $H \colon W \to \R$ be admissible in the sense of Definition \ref{def:modified_admissible}.
The action spectrum of the Lagrangian chords of $H$ is then given by 
\[
    \Sigma(H) = \{ k a_0 \, | \, k \in \Z \} \cup \{ m(H) + k a_0 \, | \, k \in \Z \}. 
\]
We impose the additional condition that 
\begin{equation} \label{eqn:nonresonance m(H) assumption}
    \frac{m(H)}{a_0} \notin \Z.
\end{equation}

We will now construct the filtered homology groups $HF^{(a,b)}(H)$ for the
admissible Hamiltonian $H$, despite the fact that it is degenerate.
In this we are following the ideas from
	\cite{Polterovich_Shelukhin_2016}.
Consider an interval $(a,b)$ such that $a, b \notin \Sigma$. 
Then, fix a $\delta> 0$ so that the intervals $[a-2\delta, a+2\delta]$ and
$[b-2\delta, b+2\delta]$ are disjoint from $\Sigma$.
Let $H_1, H_2$ be two non-degenerate Hamiltonians that are $C^2$ close to $H$:
$\Vert H_j - H \Vert_{C^2} < \delta/2$ and hence to each other.
Their action spectra are therefore also close.
By considering a continuation map, 
we obtain chain homotopy equivalences $CF(L; H_1) \to CF(L; H_2)$ and $CF(L; H_2) \to CF(L;
H_1)$ that are chain inverses of each other.
By Equation \ref{eqn:action estimate chain maps}, each of these maps increases action by at
most $\delta$, so we obtain maps
\[
    HF^{(a, b)}(H_1) \to HF^{(a+\delta, b+\delta)}(H_2) \to HF^{(a+2\delta,
    b+2\delta)}(H_1) \equiv HF^{(a, b)}(H_1).
\]
The composition of these maps is the identity map, since at the chain level the composition is
chain homotopic to the map induced by inclusion. 
This then gives that the following is well-defined (i.e.~independent of the
choice of $H_1$):
\begin{defn} \label{def:degenerate filtered Floer}
    Let $H$ be an admissible Hamiltonian and let
    $(a,b) \subseteq \R$ such that $a, b \notin \Sigma$.

   Let $H_1$ be any non-degenerate perturbation of $H$ that is sufficiently
   $C^2$ close to $H$. We define
   \[
       HF^{(a,b)}(H) = HF^{(a,b)}(H_1).
   \]
\end{defn}

By a similar argument, it follows that if $H_2 \le H_1$ pointwise, there are
natural maps induced by (a perturbation of) the monotone homotopy 
\[
    \Psi_{H_1,H_2,a,b}:HF^{(a,b)}(H_1) \to HF^{(a,b)}(H_2)
\]
as long as $a, b$ are not in the action spectrum of either Hamiltonian.
Finally, by viewing $CQ(L)$ as the Morse-Bott version of $CF$ for the zero
Hamiltonian, note that we obtain the following
\[
    HF^{(a, b)}(0) \cong H_*(L; \Z/2)
\]
for any $a \in (-a_0, 0)$ and $b \in (0, a_0)$.

\subsection{Local Floer homology}

In this section, we follow some of the ideas of local Floer homology
\citelist{\cite{Ginzburg_2010}, \cite{Ginzburg_Gurel_2009}}. Notice that these
references are concerned with Hamiltonian Floer homology, whereas we are looking
at Lagrangian Floer homology (with a Hamiltonian term). The changes are minimal,
but we will emphasize those points particularly.

The main result of this section is the following:
\begin{thm} \label{thm:filtered homology of the maximum}
    Let $H$ be an admissible Hamiltonian such that $m(H)/a_0 \notin \Z$, where $a_0$ is .
    Let $\delta > 0$ be sufficiently small so that $[m(H) - \delta, m(H)+\delta] \cap \Sigma(H) = \{ m(H) \}$. 
    Then,
 \[
     HF^{(m(H)-\delta, m(H)+\delta) }(H) \cong \Z/2[n].
 \]
 (We use $\Z/2[n]$ to denote that this has grading $n$.)

 Furthermore, if $H_1, H_2$ are two such admissible Hamiltonians, the
 continuation map (induced by linear interpolation between them) induces an
 isomorphism 
 \[
HF^{(m(H_1)-\delta, m(H_1)+\delta) }(H_1)  \to 
HF^{(m(H_2)-\delta, m(H_2)+\delta) }(H_2) 
 \]
\end{thm}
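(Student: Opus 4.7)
The plan is to compute this filtered homology by perturbing $H$ to a nearby nondegenerate Hamiltonian whose chords in the action window $(m(H)-\delta, m(H)+\delta)$ can be enumerated explicitly, and then applying the Floer--Morse comparison. Since $H$ is autonomous, every point of the maximum ball $B$ is a critical point of $H$ on $W$ and hence gives a constant Lagrangian chord with action $m(H)$; these form a degenerate Bott-type critical locus which the local Floer argument will resolve.

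First I would fix a $C^2$-small, possibly $t$-dependent perturbation $H_1 = H + \epsilon h$ with $\epsilon > 0$ small, chosen so that on a neighbourhood of $B$ the perturbation is autonomous, $h|_L$ has a unique nondegenerate critical point $p_0$ in the interior of $B$ which is a local maximum (Morse index $n$), and so that $H_1$ is globally nondegenerate. Because $dH|_L$ vanishes on $B$ but is bounded below in norm outside a slightly larger neighbourhood, for small $\epsilon$ the only critical point of $(H_1)|_L$ with Hamiltonian value within $\delta$ of $m(H)$ is a small perturbation of $p_0$. The admissibility hypothesis forbids non-constant Lagrangian chords of $H$ of period at most $1$, and a standard Arzelà--Ascoli/compactness argument shows this persists for $H_1$ provided $\epsilon$ is small enough. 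Combined with the nonresonance $m(H)/a_0 \notin \Z$, which guarantees that for small $\delta$ the only element of $\Sigma(H_1)$ in the window is (the perturbation of) $m(H)$ itself, the unique generator of $CF^{(m(H)-\delta, m(H)+\delta)}(H_1)$ is the constant capped chord at $p_0$.

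Next, I would invoke the standard Floer--Morse comparison for $C^2$-small autonomous Hamiltonians to identify the Maslov index of this constant chord with the Morse index of $h|_L$ at $p_0$, namely $n$. With a single generator of degree $n$ and nothing else for the differential to target in the window, the induced differential vanishes, so the homology is $\Z/2[n]$. By Definition \ref{def:degenerate filtered Floer} this equals $HF^{(m(H)-\delta, m(H)+\delta)}(H)$.

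For the continuation statement, I would exploit that the PSS map $\Psi \colon CQ(L) \to CF(L; H_i)$ sends the fundamental class $[L] \in QH_n(L) \cong \Z/2$ to a nonzero homology class with action bounded above by $m(H_i)$ via \eqref{eqn:action estimate chain maps}. The nonresonance condition forces $m(H_i)$ to be the unique action level in the window, so the PSS-image of $[L]$ is represented precisely at that level and must generate our one-dimensional local group. Since continuation maps commute up to chain homotopy with PSS maps, the continuation map from $H_1$ to $H_2$ sends the PSS generator to the PSS generator on the other side, giving the nonzero (hence isomorphism) map $\Z/2[n] \to \Z/2[n]$. The main obstacle will be the perturbation step: arranging nondegeneracy of $H_1$ while controlling the enumeration of chords in the action window, since we must rule out spurious non-constant chords of period $1$ that could appear under perturbation, which is where the admissibility bound on short chords enters essentially.
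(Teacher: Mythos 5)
Your overall strategy (perturb, localize to the maximum set, compare with Morse theory) is in the same spirit as the paper's, but two steps as written contain genuine gaps. First, the claim that the perturbed Hamiltonian $H_1=H+\epsilon h$ has a \emph{unique} generator in the window $(m(H)-\delta,m(H)+\delta)$ does not hold: the maximum set $\overline D\subset L$ is a closed ball, so $dH|_L$ vanishes on all of $\overline D$ and tends to zero as one approaches $\partial\overline D$ from outside; it is therefore not bounded below on any neighbourhood of $\overline D$, and for any fixed $\epsilon$ the equation $dH|_L=-\epsilon\,dh|_L$ generically has additional solutions in an annular region around $\partial\overline D$. These spurious critical points have values, hence actions, arbitrarily close to $m(H)$ and so lie in the fixed window. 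The paper avoids this by never asserting uniqueness: it computes the homology of the \emph{entire} local complex, identifying it with the filtered Morse homology of the superlevel set $\{f>a\}\subset L$, and the hypothesis that the maximum set is a ball enters precisely there, giving $\tilde H_*(S^n;\Z/2)=\Z/2[n]$ (Lemma \ref{lem:admissible from function}). Your argument uses the ball hypothesis only through the unsupported uniqueness claim.

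Second, you invoke the Floer--Morse index comparison ``for $C^2$-small autonomous Hamiltonians'' to conclude that the constant chord at $p_0$ has Maslov index $n$; but only the perturbation $\epsilon h$ is small, not $H$ itself, whose Hessian transverse to $L$ at points of the maximum set is unconstrained. For a large Hamiltonian the Robbin--Salamon index of a constant capped chord need not equal the Morse index of the restriction to $L$. This is exactly the difficulty the paper's architecture is built to circumvent: Lemma \ref{lem:exists small admissible representative} first replaces $H$ by $\rho\circ H$ with small differential near the maximum, and Lemma \ref{lem:Floer homology from Morse} then compares this with the Weinstein-neighbourhood extension $\widetilde{H|_L}$ via a continuation-map compactness argument that keeps all curves in the action window, so that indices are only ever computed for the graph-type model of Lemma \ref{lem:admissible from function}. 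Finally, for the ``furthermore'' statement, your PSS argument needs the image of the fundamental class to be \emph{nonzero} in the top local group $HF^{(m(H_i)-\delta,m(H_i)+\delta)}(H_i)$; the action estimate only bounds its action from above, and a priori it could be represented at strictly lower action, so this requires a separate argument (the paper instead obtains the continuation isomorphism from the localization of continuation trajectories in Lemma \ref{lem:local Floer supported in U} together with the chain of identifications above).
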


In order to prove this, we will need some preliminary lemmas to describe the
structure of the local Floer homology. Specifically, we will show that a
complete understanding of $HF^{(m(H)-\delta, m(H)+\delta)}(H)$, for the
differential and also for all relevant chain maps, is given by
studying curves contained in a neighbourhood of \[
    \overline{D} \coloneqq \{x \in L \, | \, H(x) = m(H)
\}.\]

\begin{lem}
    \label{lem:local Floer supported in U}

    Let $H$ be an admissible Hamiltonian such that $m(H)/a_0 \notin \Z$, and let $\delta > 0$ be a real number which is sufficiently small so that 
$[m(H) - \delta, m(H)+\delta] \cap \Sigma(H) = \{ m(H) \}$. Let $U$ be an open neighbourhood of $\overline{D} \coloneqq \{ x \in L \, | \, H(x) = m(H) \}$.

Then, for any $\hat H$ sufficiently $C^2$--close to $H$, 
$HF^{(m(H)-\delta, m(H)+\delta)}(H)$ is obtained as the homology of the
chain complex
\[
    CF^{(m(H)-\delta, m(H)+\delta)}(\hat H),
\]
all of the generators of this complex are contained in the open set $U$, and the
differential counts Floer trajectories contained entirely in $U$.

Furthermore, if $\hat H_1, \hat H_2$ are two sufficiently small perturbations of
$H$, the chain maps between $CF^{(m(H)-\delta, m(H)+\delta)}(\hat H_1)$ and 
$CF^{(m(H)-\delta, m(H)+\delta)}(\hat H_2)$ count Floer continuation trajectories 
contained in $U$.

\end{lem}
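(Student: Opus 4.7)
The plan is a two-step localization argument: first use the non-resonance hypothesis to identify the chords of $H$ with action in the window $I = (m(H)-\delta, m(H)+\delta)$, then use a bubbling-free Floer compactness argument in the $C^2$-limit $\hat H \to H$ to localize chords and Floer trajectories of nearby perturbations to $U$.

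The first step is to classify the chords of $H$ in $I$. By the non-resonance hypothesis $m(H)/a_0 \notin \Z$ and the choice of $\delta$, one has $\Sigma(H) \cap \overline I = \{m(H)\}$. Condition (1) of admissibility forces every Hamiltonian chord of $H$ of length $1$ with endpoints on $L$ to be constant, and hence located at a point of $L$ where $dH$ vanishes; condition (3) then implies $H(p) \in \{0, m(H)\}$ at any such point. Only the points of $\overline D$ yield action $m(H)$ when trivially capped, and any non-trivial capping shifts the action by a nonzero multiple of $a_0$, escaping $I$. Thus the chords of $H$ with action in $I$ are precisely the constant chords at points of $\overline D$, trivially capped. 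Now fix $V$ with $\overline D \subset V \Subset U$. To show that chords of $\hat H$ with action in $I$ lie in $V$ for $\hat H$ sufficiently close to $H$, I would argue by contradiction: if $\hat H_n \to H$ in $C^2$ admits chords $\gamma_n$ of action in $I$ with $\gamma_n \not\subset V$, then Arzelà--Ascoli applied to the Hamiltonian ODE (using geometric boundedness to stay in a compact set) extracts a $C^1$-limit chord $\gamma$ of $H$ of length $1$ on $L$ whose action lies in $\overline I \cap \Sigma(H) = \{m(H)\}$. The classification above forces $\gamma$ to be constant at a point of $\overline D$, contradicting $\gamma_n \not\subset V$ for large $n$.

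For the Floer trajectories, fix a compact $K$ with $V \subset \mathrm{int}\,K \subset K \subset U$ and shrink $\delta$ if needed so that $2\delta < a_0$. Suppose for contradiction there exist perturbations $\hat H_n \to H$ (with compatible $J_n \to J$) and Floer solutions $u_n$ of $\hat H_n$ whose asymptotic chords lie in $V$ with action in $I$, but which exit $K$ at some point $(s_n, t_n)$. The action-energy identity \eqref{eqn:action estimate} gives $E(u_n) \le 2\delta + o(1) < a_0$, so monotonicity excludes both disk and sphere bubbling, and geometric boundedness provides a uniform $C^1$ bound. After translating $u_n$ by $s_n$, standard Floer compactness produces a (possibly broken) $C^\infty_{\loc}$-limit $u_\infty$ of $H$-Floer solutions of total energy $\le 2\delta$; its intermediate asymptotics are chords of $H$ with action in $\overline I$, which by Step 1 are constant chords at points of $\overline D$, trivially capped. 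Each component $v$ of $u_\infty$ therefore bounds a disk in $(W, L)$ of symplectic area at most $2\delta < a_0$, which by monotonicity must represent the trivial relative class. The energy identity then gives $\int |v_s|^2 = 0$, forcing $v$ to be constant at a point of $\overline D$. Hence the image of $u_\infty$ lies in $\overline D \subset V$, and $C^\infty_{\loc}$ convergence near the limit point $(0, t_\infty)$ contradicts $u_n(s_n, t_n) \notin K$.

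The continuation-map statement follows from the same argument applied to linear interpolations between two small perturbations $\hat H_1$, $\hat H_2$: the additional action shift $\E_+(\hat H_2 - \hat H_1)$ and the $\partial_s H$ contribution to the energy are both $o(1)$ as $\hat H_1, \hat H_2 \to H$, so neither affects the window classification of intermediate asymptotics nor the no-bubbling bound. The main technical subtlety, in the trajectory step, is the careful bookkeeping of capping data along the broken limit so that the trivial-class identification holds on each component; this is standard local Floer theory following Ginzburg--Gurel, adapted to the Lagrangian setting.
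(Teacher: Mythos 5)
Your proposal is correct and follows essentially the same strategy as the paper's proof: identify the chords of $H$ in the action window as the trivially capped constants on $\overline D$ via the non-resonance hypothesis, then use the sub-$a_0$ energy bound to rule out bubbling and force Floer (and continuation) trajectories of nearby perturbations to converge in $C^\infty_{\loc}$ to constants in $\overline D$, hence to lie in $U$. Your write-up is somewhat more explicit about the contradiction/compactness bookkeeping (broken limits, capping classes), but the underlying argument is the same.
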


\begin{proof}

    Let $U$ be an open neighborhood of $\overline{D}$. By shrinking $U$ as necessary, we may assume that $U$ is contained in a Weinstein neighbourhood
    of $L$. We may choose a Hamiltonian $\hat H$ sufficiently $C^2$-close to $H$ so that the chords of $\hat{H}$ are close to the
    chords of $H$. These chords of $\hat H$ therefore admit capping disks that are
    contained in $U$ and have a well-defined action with
    respect to such a capping disk. For each chord of $\hat H$, we fix such a capping. Now note that, since $m(H)/a_0 \notin \Z$ and for sufficiently small $\delta$, the capped chords of $H$ in the action window $I(m(H),\delta)$  are the constants $\{ x \in
    L \, | \, H(x) = m(H) \}$ with constant capping disks, and hence for $\hat H$ sufficiently close to $H$,
    the capped chords of $\hat H$ in the action window $I(m(H),\delta)$ are also contained in $U$.

    Note that any Floer trajectory between two chords of $\hat H$ in the action window $I(m(H),\delta)$ must have
    small energy. In the limit $\hat H \to H$, the energy is too low (i.e. below $a_0$) for bubbling
    of any kind, and hence a Floer solution between chords of $\hat H$ in the action window $I(m(H),\delta)$ will converge to
    a Floer solution with zero energy, and hence to a constant. For $\hat H$
    sufficiently $C^2$-close to $H$ then, the Floer trajectory is converging in $C^{\infty}_{loc}$ to constant trajectories in $\overline{D}$, and so it will also be contained
    entirely in the neighbourhood $U$.
   
    By a similar argument, for $\hat H_1$ and $\hat H_2$ sufficiently $C^2$-close to $H$, a Floer continuation trajectory between a chord of
    $\hat H_1$ and a chord of $\hat H_2$, both in the action window $I(m(H),\delta)$ will also have
    small energy, and in particular energy less than $a_0$, so no bubbling can occur. Similarly, no bubbling can occur in the limit $\hat H_j \to
    H$, and the Floer continuation trajectory converges in $C^{\infty}_{loc}$ to constant trajectories in $\overline{D}$. Hence,
    for $\hat H_j$ sufficiently $C^2$-close to $H$, the continuation trajectory will
    also lie entirely in $U$.
\end{proof}

\begin{lem} \label{lem:only care about U}

    Suppose that $H_0, H_1$ are admissible Hamiltonians with $m(H_j)/a_0
    \notin \Z$ for $j=0,1$. 
 Let $\delta > 0$ be sufficiently small that 
 \[ 
     [m(H_j) - \delta, m(H_j)+\delta ] \cap \Sigma(H_j) = \{ m(H_j) \}
 \] 
 for $j=0,1$. For $j = 0, 1$, write
    \[
        \overline{D}_j \coloneqq \{ x \in L \, | \, H_j = m(H_j) \}.
    \]

    If $\overline{D}_0 = \overline{D}_1$ and $dH_0 = dH_1$ in an open
    neighbourhood $U$ of $\overline D\coloneqq \overline{D}_0 = \overline{D}_1$, then
    \[
        HF^{(m(H_0)-\delta, m(H_0) + \delta)}(H_0)  \cong 
        HF^{(m(H_1)-\delta, m(H_1) + \delta)}(H_1)  
    \]
\end{lem}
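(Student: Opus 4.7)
The plan is to construct the isomorphism via the continuation map associated to the linear interpolation between $H_0$ and $H_1$, and to show that all relevant moduli spaces used to compute the filtered homology are confined to the neighborhood $U$ where the two Hamiltonians agree up to a constant. The hypotheses already force $H_0 - H_1$ to be locally constant on $U$: shrinking $U$ to be connected if necessary, $d(H_0 - H_1) = 0$ on $U$ implies $H_0 - H_1 \equiv c$ there, and evaluating on $\overline{D}$ gives $c = m(H_0) - m(H_1)$. In particular $X_{H_0} = X_{H_1}$ on $U$, so the chords of $H_0$ and $H_1$ that lie in $U$ are literally the same maps $[0,1] \to U$, with their $H_j$--actions differing by exactly $c$.

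Choose sufficiently $C^2$-small nondegenerate perturbations $\hat H_0, \hat H_1$ as in Lemma \ref{lem:local Floer supported in U}. We may arrange $\hat H_1 = \hat H_0 - c$ on some smaller neighborhood $U' \Subset U$ of $\overline{D}$ (first perturb $H_0$ to $\hat H_0$, then define $\hat H_1$ by this identity on $U'$ and interpolate to $H_1$ outside, keeping the $C^2$ norm of $\hat H_1 - H_1$ small). Consider the homotopy $\hat H_s = (1-\rho(s))\hat H_0 + \rho(s)\hat H_1$ for a smooth monotone $\rho \colon \R \to [0,1]$ with $\rho(s)=0$ for $s\le 0$ and $\rho(s)=1$ for $s\ge 1$. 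On $U'$ this reduces to $\hat H_s = \hat H_0 - \rho(s)c$, so $X_{\hat H_s}|_{U'} = X_{\hat H_0}|_{U'}$ is independent of $s$, and $\partial_s \hat H_s|_{U'} = -\rho'(s) c$.

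Now apply the small-energy argument of Lemma \ref{lem:local Floer supported in U}: the nonresonance condition $m(H_j)/a_0 \notin \Z$ ensures the chords of $\hat H_j$ in the window $(m(H_j)-\delta, m(H_j)+\delta)$ are $C^0$-close to constant chords at points of $\overline{D}$, and any Floer continuation trajectory between such chords has energy bounded by $O(\delta)+|c|\cdot\|\rho'\|_{L^1}$ times a constant. As $\hat H_j \to H_j$, the energy of such a trajectory (after subtracting the explicit action change from $\partial_s \hat H$) becomes too small to support bubbling, so the trajectory converges in $C^\infty_\mathrm{loc}$ to a constant in $\overline{D}$; for $\hat H_j$ sufficiently close to $H_j$, the trajectory therefore lies entirely in $U'$. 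On such a trajectory the action estimate \eqref{eqn:action estimate} gives $\omega(A)=0$ (the trajectory is contained in a Weinstein neighbourhood of $L$) and $\int_S \partial_s \hat H\,ds\wedge dt = -c$, so $\mathcal{A}_{\hat H_1}(\gamma_+, D_+) \le \mathcal{A}_{\hat H_0}(\gamma_-, D_-) - c$. Consequently the continuation map restricts to a map $HF^{(m(H_0)-\delta, m(H_0)+\delta)}(H_0) \to HF^{(m(H_1)-\delta, m(H_1)+\delta)}(H_1)$.

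The reverse interpolation (swapping the roles of $H_0$ and $H_1$) produces a candidate inverse, and the usual 2-parameter chain homotopies between the two compositions and the identity are again confined to $U'$ by the same small-energy argument. This establishes the claimed isomorphism. The main technical obstacle is exactly the confinement step: ensuring that all continuation trajectories and chain-homotopy configurations used to define these maps and their inverses lie within $U'$, which relies on the energy bound falling below the bubbling threshold $a_0$ together with the nonresonance hypothesis to prevent limit trajectories from leaving $\overline{D}$.
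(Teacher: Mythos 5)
Your route differs from the paper's: you build the isomorphism out of continuation maps for the linear interpolation between (perturbations of) $H_0$ and $H_1$, whereas the paper never runs a continuation between the two Hamiltonians at all. It perturbs both by the \emph{same} generic small $f$, invokes Lemma \ref{lem:local Floer supported in U} for each $H_j+f$ separately (each such application is a homotopy between two perturbations of a single fixed Hamiltonian, so the confinement statement applies as stated), and then observes that on $U$ one has $H_0+f = H_1+f + c$ with $c = m(H_0)-m(H_1)$, hence identical Hamiltonian vector fields, identical chords, identical Floer trajectories, and actions shifted by exactly $c$ — so the two windowed chain complexes are literally the same complex, no chain map needed.

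The reason the paper avoids your route is that your confinement step has a genuine gap, and it is circular as written. For a continuation trajectory $u$ the energy identity \eqref{eqn:action estimate} reads
\[
E(u) \;=\; \omega(A) - \A_{\hat H_1}(\gamma_+,D_+) + \A_{\hat H_0}(\gamma_-,D_-) + \int_S \partial_s \hat H_s(u)\, ds\wedge dt,
\]
and the only a priori control on the last term is $\E_+(\hat H_1 - \hat H_0)$ from \eqref{eqn:Eplus}, i.e.\ a supremum over \emph{all of} $W$. The hypotheses of the lemma say nothing about $H_1 - H_0$ outside $U$; the two Hamiltonians may differ there by much more than $a_0$, in which case the a priori energy bound is far above the bubbling threshold and the small-energy argument of Lemma \ref{lem:local Floer supported in U} does not get started. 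Your evaluation $\int_S \partial_s \hat H_s\,ds\wedge dt = -c$ (which would indeed give $E(u)\approx 0$) is valid only once you already know $u \subset U'$ — which is exactly the confinement you are trying to prove. The same problem recurs for the two-parameter chain-homotopy configurations. To rescue a continuation-map proof you would need an additional idea (e.g.\ a monotone two-step homotopy through Hamiltonians with controlled difference, or a Gromov-monotonicity argument excluding large excursions); the simpler fix is to adopt the paper's direct chain-level identification, for which your correct observation that $H_0 - H_1 \equiv c$ on a connected $U$ is precisely the needed ingredient.
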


\begin{proof}
    By Lemma \ref{lem:local Floer supported in U}, the filtered Floer homology
    groups 
    $HF^{I(m(H),\delta)}(H_j)$, $j=0,1$
    is computed entirely 
    by considering generators and curves contained in $U$. For a generic $C^2$-small
    $f:W \to \R$ supported in a neighborhood of $L$, both $H_0 + f$ and $H_1 + f$ are nondegenerate, and since the
    differentials are equal in $U$ by hypothesis, $dH_0 = dH_1$ in $U$, and therefore the complexes $CF^{I(m(H_0),\delta)}(H_0+f)$ and $CF^{(I(m(H_1,\delta))}(H_1+f)$ can be identified, and the result follows. 
\end{proof}

Given an admissible $H_0$, we will now construct a ``small'' admissible
Hamiltonian $H_1$ whose local homology relative to a small interval around its maximum is isomorphic to that of $H_0$.

\begin{lem} \label{lem:exists small admissible representative}
    Let $H_0$ be an admissible Hamiltonian such that $m(H_0)/a_0 \notin \Z$.  Let $0< \alpha < m(H_0)$ and $\alpha < a_0$, and choose $0 < \delta < \frac{1}{2} \alpha$ small enough so that both
\[ 
     [m(H_0) - 2\delta, m(H_0)+2\delta ] \cap \Sigma(H_0) = \{ m(H_0) \}
 \] 
 and 
\[ 
    [\alpha - 2\delta, \alpha+2\delta ] \cap \left ( a_0 \Z \cup (a_0 \Z +
    \alpha) \right )  = \{ \alpha \}.
 \] 

    Then there exists an admissible Hamiltonian $H_1$ such that $m(H_1) = \alpha$ and so that 
    \[
        HF^{(m(H_0) -\delta, m(H_0) + \delta)}(H_0) \cong HF^{(m(H_1)
        -\delta, m(H_1) + \delta)}(H_1).
    \]
    Furthermore, by choosing $\alpha$ sufficiently small, $dH_1$ can be made arbitrarily small as well (in the $C^0$ topology).
\end{lem}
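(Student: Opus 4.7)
The plan is to construct $H_1$ as $H_1 = \phi \circ H_0$ for an appropriate smooth, nondecreasing reparametrization $\phi\colon \R \to \R$, and then invoke Lemma \ref{lem:only care about U}. I would choose $\phi$ with the following properties: $\phi(s) = 0$ for $s \le 0$, $\phi(s) = \alpha$ for $s \ge m(H_0)$, $\phi'(s) > 0$ on $(0, m(H_0))$, and, for parameters $\epsilon', \eta > 0$ to be specified, $\phi(s) = s + (\alpha - m(H_0))$ on $[m(H_0) - \epsilon', m(H_0)]$ while $\phi'(s) \le \eta$ on $[0, m(H_0) - \epsilon']$. The interpolation conditions $\phi(0) = 0$ and $\phi(m(H_0) - \epsilon') = \alpha - \epsilon'$ are compatible with $\phi' \le \eta$ precisely when $\alpha - \epsilon' \le \eta(m(H_0) - \epsilon')$, which is arrangeable for $\alpha$ small.

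Next I would verify that $H_1$ is admissible in the sense of Definition \ref{def:modified_admissible}. Compact support follows from compact support of $H_0$ together with $\phi(0) = 0$. Since $\phi$ is strictly monotone on $[0, m(H_0)]$, $\{H_1 = m(H_1)\} = \{H_0 = m(H_0)\}$, so the maximum is attained on a set whose intersection with $L$ is $\overline{D}_0$, a closed $n$-ball. The critical values of $H_1|_L = \phi \circ H_0|_L$ are $\phi(0) = 0$ and $\phi(m(H_0)) = \alpha = m(H_1)$, as required. For the chord condition, $X_{H_1} = \phi'(H_0) X_{H_0}$, so trajectories of $X_{H_1}$ are time-reparametrizations of trajectories of $X_{H_0}$: a chord of $H_0$ along the level set $\{H_0 = c\}$ with period $T$ yields an $H_1$-chord of period $T/\phi'(c) \ge T > 1$ whenever $\phi'(c) \le 1$, while constant trajectories remain constant.

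With $H_1$ admissible, $\overline{D}_1 = \overline{D}_0$, and $dH_1 = dH_0$ on the open neighbourhood $U \coloneqq \{H_0 > m(H_0) - \epsilon'\}$ of $\overline{D}_0$ (since $H_1 = H_0 + (\alpha - m(H_0))$ there), Lemma \ref{lem:only care about U} directly gives the desired isomorphism
\[
    HF^{(m(H_0) - \delta, m(H_0) + \delta)}(H_0) \cong HF^{(m(H_1) - \delta, m(H_1) + \delta)}(H_1).
\]

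The final assertion about the $C^0$-smallness of $dH_1$ is the delicate point of the argument. Since $dH_1 = \phi'(H_0)\, dH_0$, on $U$ we have $\phi'(H_0) = 1$ and $|dH_1| = |dH_0|$; but by continuity of $dH_0$ together with $dH_0 \equiv 0$ on the compact max set $\{H_0 = m(H_0)\}$, the quantity $\sup_U |dH_0|$ tends to $0$ as $\epsilon' \to 0$. Off $U$, $\phi'(H_0) \le \eta$, so $|dH_1| \le \eta \Vert dH_0 \Vert_{C^0}$. The main obstacle is coupling $\epsilon'$ and $\eta$ to $\alpha$ consistently with the interpolation constraint so that both estimates tend to $0$; this is achievable, for instance, by taking $\epsilon' = \alpha/2$ and $\eta$ of order $\alpha/m(H_0)$, and then letting $\alpha \to 0$.
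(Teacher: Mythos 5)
Your proposal is correct and follows essentially the same strategy as the paper's proof: post-compose $H_0$ with a monotone reparametrization of its values that is the identity (up to a shift) above a level just below $m(H_0)$ and constant below, verify admissibility, and then invoke Lemma \ref{lem:only care about U} on the neighbourhood $U=\{H_0>m(H_0)-\epsilon'\}$. The one substantive difference is where you flatten: the paper's $\rho$ is constant below the level $a=m(H_0)-\alpha$ and equal to the identity above $b=m(H_0)-2\epsilon$, which forces $\rho'$ to exceed $1$ somewhere (the average slope on $[a,b]$ is already $1$); this is why the paper must allow $\rho'\le 1+\eta$ and argue, via the closedness of the set of return times (Arzela--Ascoli), that chords of return time at most $1+\eta$ are still constant. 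Your $\phi$ flattens at level $0$ instead, so the total rise is only $\alpha<m(H_0)$ and you can keep $\phi'\le 1$ everywhere, making the chord verification immediate --- a genuine simplification of that step.

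One small inconsistency to repair: you cannot have both $\phi(s)=s+(\alpha-m(H_0))$ on $[m(H_0)-\epsilon',m(H_0)]$ and $\phi'\le\eta<1$ on all of $[0,m(H_0)-\epsilon']$, since smoothness forces $\phi'\to 1$ as $s\to (m(H_0)-\epsilon')^-$. This does not affect admissibility or the application of Lemma \ref{lem:only care about U} (for those you only need $\phi'\le 1$), but it does enter your final estimate ``off $U$, $|dH_1|\le\eta\,\Vert dH_0\Vert_{C^0}$.'' The fix is the one already implicit in your own argument: impose $\phi'\le\eta$ only on $[0,m(H_0)-2\epsilon']$ and $\phi'\le 1$ on the transition band $[m(H_0)-2\epsilon',m(H_0)-\epsilon']$; on the corresponding region $\{m(H_0)-2\epsilon'\le H_0\le m(H_0)-\epsilon'\}$ you then bound $|dH_1|\le|dH_0|$, which tends to $0$ with $\epsilon'$ by the same continuity argument you use on $U$ (this is exactly how the paper handles the analogous point, via $\max|dH_0|$ on $\overline{S_\alpha}$). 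With that adjustment the $C^0$-smallness claim goes through.
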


\begin{proof}

    We may assume that $\alpha < m(H_0)$, since otherwise the result is
    trivially true

    Since $H_0$ is admissible, any non-constant chord has return time $T>1$. Observe though that by the Arzela-Ascoli Theorem (\cite{Dunford_Schwartz_1958}, Theorem IV.6.7), the set of return times of chords of $H_0$ is a
    closed set, so there exists $\eta > 0$ so that all chords of length at most
    $1+\eta$ are constant.

    Let $a = m(H_0) - \alpha$ and $b = m(H_0) - 2\epsilon$, and note that $a < b$. Let $\rho \colon \R \to \R$ be a smooth function with the following
    properties:
    \begin{itemize}
        \item $\rho(s) = a$ for all $s \le a$
        \item $\rho(s) = s$ for all $s \ge b$
        \item $\rho'(s) \le 1+\eta$ for all $s$
        \item $\rho'(s) > 0$ for $s> a$.
    \end{itemize}

    Consider now the function \[
        H_1 = \rho\circ H_0 - a = \rho \circ H_0 - m(H_0) + \alpha.
    \]
    We claim that $H_1$ is
    admissible. To see this, we first calculate 
    \begin{equation*}
    	H_1(x) = \begin{cases} 
    		0 & H_0(x) \leq m(H_0)-\alpha \\
    		H_0(x) - m(H_0) + \alpha & H_0(x) \geq m(H_0) -2\epsilon,
    		\end{cases}
    \end{equation*}
    and we note that in the region $\{x \in W \mid H_(x) \geq m(H_0) - 2\epsilon \}$, $H_0(x) - m(H_0) + \alpha \ge \alpha -2\epsilon > 0$ since, by hypothesis, $0 < \alpha < \frac{1}{2}a_0$. Since $\rho'(s) > 0$ for $s > a$, it follows that $H_1 > 0$ for $\{x \in W \mid a < H_0(x) < b\}$. Combined with the above, we conclude that $H_1 \ge 0$ pointwise.
    
Now note that
    \[
        X_{H_1} = \rho'(H_0) X_{H_0},
    \]
    and therefore the trajectories of $H_1$ are reparametrizations of trajectories of
    $H_0$. Furthermore, the chords of $H_1$ of return time at most $1$ correspond to
    chords of $H_0$ of return time at most $1+\eta$, and hence are constant. 

    By construction, set of $W$ where $H_1$ attains its maximum is the same as the corresponding set for $H_0$, and hence
    its restriction to $L$ is diffeomorphic to a closed ball.
    Finally, notice that since $dH_1 = \rho'(H_0) \, dH_0$ 
    and $\rho \circ H_0 =0$
    on the set $\{x \in W \mid H_0(x) \le a\}$, 
    the critical points of $H_1$ are either critical points of $H_0$ or are any
    points at which $H_0 \le a$. Thus, the critical values of $H_1 \vert_L$ has are $0$ and
    $m(H_0) -a = \alpha = m(H_1)$. This establishes that $H_1$ is admissible.

    By definition, \[
        m(H_1) = m(H_0) - a = \alpha,
    \]

    and, we observe that, by construction of $H_1$,
    the differentials of $H_0$ and $H_1$ agree on the set
    $H_0^{-1}((m(H_0) - 2\epsilon, +\infty))$, which is an open neighbourhood of
    $\overline{D} = \{ x \in L \, | \, H_0(x) = m(H_0) \}$. 
    Hence, by Lemma \ref{lem:only care about U}, the first part of the result now follows.

    To see the second part, observe that $dH_1 = 0$ except when $H_0 > m(H_0) - \alpha$.  
    Let $S_\alpha = \{ x \in W \, | \, H_0(x) > m(H_0) - \alpha \}$. By the
    compact support of $H_0$, {$\overline{S_\alpha}$ is also} compact. We therefore have 
    \[
        \max |dH_1| \le (1+\eta) \max |dH_0||_{\overline{S_\alpha}}.
    \]
    Now, $dH_0 = 0$ on $\cap_{\alpha > 0} S_\alpha$, so by choosing $\alpha$
    sufficiently small, $dH_1$ can be made arbitrarily small on some open neighborhood of $\overline{D}$.
\end{proof}

\begin{lem} \label{lem:admissible from function}

    Let $f \colon L \to [0, +\infty)$ be a smooth function with two critical values, $0$
    and $m < a_0$, achieving its maximum on a set diffeomorphic to a closed
    ball.

    If $f$ is sufficiently $C^1$ small, then 
    there exists an admissible $\tilde f \colon W \to \R$ such that
    $\tilde f\vert_L = f$, $0 \le \tilde f \le m$, $\tilde f$ is supported in a neighbourhood of $L$ and its 
    low-energy Floer homology satisfies:
    \[
        HF^{(a,b)}(\tilde f) = \Z_2[n]
    \]
    for some $a, b, $ regular values of $f$ with $ 0 < a < m < b < a_0$. 
\end{lem}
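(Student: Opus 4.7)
The plan is to construct the extension $\tilde f$ inside a Weinstein neighborhood of $L$, verify admissibility (only the chord-length condition is nontrivial), and compute the low-energy Floer homology via a Morse-type perturbation localized to $\overline D = \{f = m\}$.

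For the construction, use the Weinstein neighborhood theorem to identify an open neighborhood $U$ of $L$ in $W$ symplectically with the disk bundle $\{\|p\| < R\} \subset T^*L$ for some $R > 0$, and let $\chi : \R \to [0,1]$ be a smooth cutoff equal to $1$ on $[-R^2/4, R^2/4]$ and supported in $[-R^2/2, R^2/2]$. Define
\[
    \tilde f(q, p) = \chi(\|p\|^2) f(q)
\]
on $U$ and extend by zero outside. Conditions (2), (3), (4) of admissibility are immediate: $\tilde f|_L = f$ has the two critical values $\{0, m\}$, the maximum $m$ is attained on $\overline D \subset L$ diffeomorphic to a closed ball, and $\tilde f$ is compactly supported in $U$. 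For condition (1), the Hamiltonian vector field $X_{\tilde f}$ is purely vertical on the zero section with magnitude $|\nabla f(q)|$, since the fiber derivative of $\chi(\|p\|^2)$ vanishes at $p=0$. For $\|f\|_{C^1}$ sufficiently small, every trajectory starting on $L$ remains in the region $\{\chi \equiv 1\}$ for time $t \in [0,1]$, where Hamilton's equations reduce to $\dot q = 0$, $\dot p = \nabla f(q)$; the trajectory $(q_0, 0) \mapsto (q_0, t\nabla f(q_0))$ returns to $L$ in time $\le 1$ only when $\nabla f(q_0) = 0$, yielding only constant chords.

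For the Floer homology, pick regular values $a, b$ of $f$ with $0 < a < m < b < a_0$ sufficiently close to $m$ that $(a,b) \cap \Sigma(\tilde f) = \{m\}$, which is possible since $m/a_0 \in (0,1) \setminus \Z$. By Lemma \ref{lem:local Floer supported in U}, $HF^{(a,b)}(\tilde f)$ is computed by any sufficiently $C^2$-small nondegenerate perturbation with all generators and Floer trajectories confined to a prescribed neighborhood of $\overline D$. Use a perturbation $\hat f = \tilde f + \epsilon(g_L(q) + \eta\|p\|^2)$ (with a fiber cutoff), where $g_L : L \to \R$ is a Morse function whose unique critical point in a fixed neighborhood of $\overline D$ is a non-degenerate local maximum at an interior point $x_0$ of $\overline D$. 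Since both $f$ and $g_L$ attain their maxima in $\overline D$, their gradients co-align (both point inward) on a collar of $\partial \overline D$, so $\nabla(f + \epsilon g_L)$ does not vanish there; together with $\nabla f \equiv 0$ on $\overline D$, this forces the critical set of $\hat f|_L$ in a neighborhood of $\overline D$ to reduce to $\{x_0\}$ for $\epsilon$ small. The fiber quadratic $\eta\|p\|^2$ renders the corresponding constant chord non-degenerate with Maslov index $n$ (a local max in the $L$-direction with trivial normal quadratic form). Since this is the unique generator in the window, the differential vanishes trivially and $HF^{(a,b)}(\tilde f) \cong \Z_2[n]$.

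The main obstacle is the uniqueness of the critical point of $\hat f$ in the prescribed neighborhood of $\overline D$: because $f$ is smooth with $f \equiv m$ on $\overline D$, all derivatives of $f$ vanish on $\overline D$ including on its boundary, so $|\nabla f|$ may decay to arbitrarily high order near $\partial \overline D$, precluding a naive "$\epsilon|\nabla g_L| < |\nabla f|$" bound. The resolution relies on the geometric co-alignment observation above; alternatively, one may invoke the deformation invariance of the local Floer homology of an isolated invariant set (à la Ginzburg), treating $\overline D$ as an isolated invariant set for any admissible Hamiltonian with that maximum level, and reducing to a standard model for which the local homology is directly seen to be $\Z_2[n]$.
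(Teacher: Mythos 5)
Your construction of $\tilde f$ and the verification of admissibility coincide with the paper's: both cut off $f$ fibrewise in a Weinstein neighbourhood and observe that the time-$1$ flow carries $L$ to the graph of $-df$, so the chords are exactly the constant ones at critical points of $f$. The divergence, and the gap, is in the computation of $HF^{(a,b)}(\tilde f)$. You want a perturbation whose \emph{only} generator in the window is a single nondegenerate maximum $x_0$ in the interior of $\overline D$, and you correctly identify the obstruction: since $f\equiv m$ on the closed ball $\overline D$, all derivatives of $f$ vanish on $\overline D$ including at $\partial\overline D$, so $|\nabla f|$ can be arbitrarily small on an outer collar and a naive comparison with $\epsilon|\nabla g_L|$ fails. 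But your proposed resolution does not close the gap: the ``co-alignment'' of $\nabla f$ and $\nabla g_L$ on a collar of $\partial\overline D$ is not a consequence of the stated hypotheses on $g_L$ (that its unique critical point near $\overline D$ is a maximum at $x_0$). Such a function can have its gradient pointing in essentially arbitrary directions on the collar, so $\nabla f+\epsilon\nabla g_L$ can vanish there. To make this work you would have to \emph{choose} $g_L$ adapted to $f$ --- e.g.\ $g_L=h\circ f$ on the outer collar for an increasing $h$, so that $d(f+\epsilon g_L)[\nabla f]=(1+\epsilon h'(f))|\nabla f|^2>0$ --- and then cap it off inside $\overline D$ with a unique nondegenerate maximum. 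As written, the uniqueness of the generator, on which your entire computation rests (``the differential vanishes trivially''), is not established; the alternative you mention (deformation invariance of local Floer homology \`a la Ginzburg) is only named, not carried out.

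The paper sidesteps this issue entirely: it takes a generic $C^2$-small Morse perturbation $g$ of $f$, accepts that $g$ may have \emph{many} critical points near $\overline D$ with values in $(a,b)$, identifies the window Floer complex with the window Morse complex of $g$ (all trajectories being confined to the Weinstein neighbourhood by the same low-energy argument you invoke), and then computes the filtered Morse homology topologically: since the only critical values of $f$ are $0$ and $m$ and $\{f=m\}$ is a closed $n$-ball, the superlevel set $\{f>a\}$ is an open ball, and the homology of the window is $\Z/2$ in degree $n$ irrespective of how many critical points $g$ has. That topological computation is the robust replacement for your uniqueness claim; either adopt it or fully carry out the adapted choice of $g_L$ sketched above. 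Two smaller points: the fibre term $\epsilon\eta\|p\|^2$ is unnecessary (and alters the dynamics) --- nondegeneracy of the chord at $x_0$ already follows from nondegeneracy of $x_0$ as a critical point of the base function, via transversality of $L$ with the graph of the differential; and you should note that recapped generators $xt^r$, $r\neq 0$, have actions shifted by multiples of $a_0$ and so miss the window $(a,b)$ precisely because $0<a<m<b<a_0$.
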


\begin{proof}
    
    Fix a metric on $L$. Then for some $R>0$, there is a fibrewise $R$-ball  neighbourhood $N$ of
    the $0$-section in $T^*L$ diffeomorphic to a Weinstein neighbourhood $X \subset W$ of the
    Lagrangian $L \subset X$. Fix a diffeomorphism $\psi:X \to N$ from $X$ to $N$.

    For $f$
    sufficiently $C^1$ small, the graph of $df:L \to T^*L$ is contained in this
    Weinstein neighbourhood. Let $V$ be a fibrewise $r$-ball neighbourhood so
    that $r < R$ and the graph of $df$ is contained in $V$.

    Define a smooth function $\phi \colon [0, +\infty) \to [0,1]$ so that $\phi(u) = 1$
    for all $u \le r$ and so that $\phi(u) = 0$ for all $u \ge R$. 

    Then, the function $\widetilde{f}(p, q) = \phi(|p|) f(q)$ defines an extension
    of $f$ from $L$ to all of $W$. Observe that the time-1 flow of the
    Hamiltonian vector field of $\widetilde{f}$ sends $L$ to the graph of $-df$. 
    Thus, the chords correspond precisely to the critical points of $f$ and are therefore constant, and so the
    resulting extension is thus admissible.
    Let $g:L \to \R$ be a Morse function on $L$ which is sufficiently $C^2$-close to $f$, so that
    \begin{enumerate}
    	\item is sufficiently small that 
    	the graph of $-dg$ also stays in the neighbourhood $V$
    	\item $a$ and $b$ are regular values for $g$,
    	\item any
    	critical point $x \in L$ of $g$ with critical value $c_x \in (a,b)$ is contained in an open neighborhood of $x$ close to $\{x \in L \mid f(x) = m \}$.
    \end{enumerate}  
    Notice also that $f$ has a
    Hessian that is identically $0$ over its maximum set. We may then also
    assume that $g$ has a Hessian whose eigenvalues are small but non-zero at
    all critical points. Then, constructing
    $\widetilde{g}:W \to \R$ by
    \begin{equation*}
    	\widetilde{g}(x) = \begin{cases}
    		\phi(|p|)g(q) & x \in X, (p,q) = \psi(x) \\
    		0 & x \notin X
    	\end{cases}
    \end{equation*} gives a Hamiltonian on $W$
    whose Lagrangian chords are non-degenerate, and in bijective correspondance to critical
    points of $g$. Furthermore (by the smallness of the Hessian condition), 
    the Morse indices of the critical points of $g$
    are the Maslov indices of the corresponding Lagrangian chords for
    $\widetilde{g}$. Also by construction, $\widetilde{g}$ and $\widetilde{f}$
    are $C^2$-close.

    We now observe that in the action window $(a,b)$ where $0< a< m < b<a_0$, 
    the Lagrangian chords of $\widetilde{g}$ correspond to those critical points
    of $g$ obtained as perturbations of the degenerate family of maxima of $f$. 
    In particular then, they all have action close to $m$ and hence
    the Floer trajectories for $\widetilde{g}$ connecting two such chords have small
    energy. As we consider perturbations $\widetilde{g}$ approaching
    $\widetilde{f}$, these trajectories will converge in $C^\infty_\text{loc}$ 
    to constant trajectories. Therefore, for
    $\widetilde{g}$ sufficiently close to $\widetilde{f}$, all of the Floer trajectories between chords of $\widetilde{g}$ in the action window $(a,b)$ will be contained
    entirely in the neighbourhood $V$.

    We therefore have that the Lagrangian Floer homology $HF^{(a,b)}(L;\widetilde g)$ is
    precisely the Morse homology $H^{(a,b)}(L;g)$ of $L$ in the action window $(a,b)$ computed using the Morse function $g$. 
    By standard arguments, this is then the Morse homology of
    $g$ also taken in this action window $(a,b)$. By hypothesis, the critical
    set $\{ f(x) = m \, | \, x \in L \}$ is diffeomorphic to a closed ball, so (by the
    tubular neighbourhood theorem and using the gradient flow), 
    $\{ f(x) > a \, | \, x \in L \}$ is
    diffeomorphic to an open ball for $a>0$, since the only critical values of $f$ are $0$ and $m$. Therefore, the filtered Morse homology $H^{(a,b)}(L;g)$ in the window $(a,b)$ is isomorphic to the reduced homology $\tilde{H}_*(S^{2n};\Z/2) = \Z/2[n]$ - this follows from the definition of the Morse homology $H^{(a,b)}(L;g)$ and Proposition 2.22 in \cite{Hatcher_2002} - the result follows.
\end{proof}

\begin{lem} \label{lem:Floer homology from Morse}

Let $H_0:W \to \R$ be an admissible Hamiltonian that is sufficiently $C^1$-small so that an admissible extension $\widetilde{H_0|_L}:W \to \R$ of $H_0|_L:L \to \R$ as constructed in Lemma \ref{lem:admissible from function} exists. We also suppose that $3 m(H_0) < a_0$. Denote by $F_0 = \widetilde{H_0 \vert_{L}}$ the extension of $H_0|_L$ constructed in Lemma \ref{lem:admissible from function}. 

Then, for $0 < a < m(H_0) < b < a_0$, 
\[
    HF^{(a,b)}(F_0) \cong HF^{(a,b)}(H_0)
\]

\end{lem}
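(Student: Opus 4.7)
The strategy is to reduce both filtered Floer homologies to the small window around the common maximum level $m(F_0) = m(H_0)$, and then invoke the isomorphism portion of Theorem \ref{thm:filtered homology of the maximum}.

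Both $H_0$ and $F_0$ are admissible with the same maximum value $m(H_0) = m(F_0)$ attained on the same subset $\overline{D} \subset L$. The non-resonance condition $m(H_0)/a_0 \notin \Z$ (a consequence of $3 m(H_0) < a_0$) guarantees that the action spectrum of either Hamiltonian intersected with $[a,b]$ is exactly $\{m(H_0)\}$, since $0 < a < m(H_0) < b < a_0$. For a sufficiently small $\delta > 0$, I would apply a shrink interval argument analogous to \eqref{eqn:shrink interval}, carried out at the level of $C^2$-small non-degenerate perturbations (as in Definition \ref{def:degenerate filtered Floer}), to obtain
\[
    HF^{(a,b)}(H_0) \cong HF^{(m(H_0) - \delta, m(H_0) + \delta)}(H_0),
\]
and the analogous identity with $F_0$ in place of $H_0$.

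Next, I would invoke the second assertion of Theorem \ref{thm:filtered homology of the maximum} applied to the pair $(H_0, F_0)$, which yields the continuation isomorphism
\[
    HF^{(m(H_0) - \delta, m(H_0) + \delta)}(H_0) \cong HF^{(m(F_0) - \delta, m(F_0) + \delta)}(F_0).
\]
Chaining this with the two shrink interval isomorphisms from the previous step produces the desired identification $HF^{(a,b)}(H_0) \cong HF^{(a,b)}(F_0)$.

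The technical heart of the argument lies in controlling action windows across these maps. The hypothesis $3 m(H_0) < a_0$ provides the key quantitative margin: the linear interpolation continuation map from $H_0$ to $F_0$ shifts actions by at most $\E_+(F_0 - H_0)$, and because $H_0$ may take negative values off $L$ while $F_0 \ge 0$, this shift can be on the order of $2 m(H_0)$. For the shifted window to remain inside $(0, a_0)$ and contain only $m(H_0)$ in the spectrum, one needs approximately $b + 2 m(H_0) < a_0$, which is supplied by $3 m(H_0) < a_0$. The main obstacle I anticipate is the bookkeeping verification that the $C^2$-small perturbations used in the shrink interval step do not introduce extraneous spectral values near the endpoints of $(a,b)$, and that the various continuation and perturbation operations can be composed consistently without crossing adjacent Maslov shifts.
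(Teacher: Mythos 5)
Your argument is circular: the second assertion of Theorem \ref{thm:filtered homology of the maximum} (that continuation maps between admissible Hamiltonians induce isomorphisms on the local Floer homology at the maximum) is proved in the paper \emph{using} Lemma \ref{lem:Floer homology from Morse} -- the chain of isomorphisms in the theorem's proof passes through exactly the identification $HF^{(m(H_1)-\epsilon,m(H_1)+\epsilon)}(H_1)\cong HF^{(m(F_1)-\epsilon,m(F_1)+\epsilon)}(F_1)$ that this lemma supplies. So you cannot invoke that assertion here. Your window-shrinking step is fine (the spectrum of an admissible Hamiltonian meets $(0,a_0)$ only in $\{m(H_0)\}$, so \eqref{eqn:shrink interval} applies), but it reduces the problem to precisely the hard point, which you then assume. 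Note also that the other available local tool, Lemma \ref{lem:only care about U}, does not apply: $F_0(p,q)=\phi(|p|)\,H_0|_L(q)$ agrees with $H_0$ \emph{on} $L$ but not to first order in the normal directions, so $dF_0\neq dH_0$ on a neighbourhood of $\overline D$ in $W$.

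The missing content is the following: the continuation maps $\Phi,\Psi$ between $CF(F_0)$ and $CF(H_0)$ a priori shift action by up to $C=m(H_0)$, which is enough to push a generator of action near $0$ up to action near $C$, i.e.\ across the level $a$ and out of the subcomplex $CF^{<a}$. The paper rules this out by a compactness argument: if perturbed continuation strips $u_n$ connected a generator of action $<\delta$ to one of action $>C-\delta$, their energy would be at most $3\delta\to 0$, yet each $u_n$ would have to cross the level set $H_0=C/2$; analyzing the three possible behaviours of the crossing times $\sigma_n$ (convergent, $\to+\infty$, $\to-\infty$) forces the limit to be a nonconstant zero-energy strip, a contradiction. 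Only after this does one know $\Phi_n,\Psi_n$ preserve both $CF^{<a}$ and $CF^{<b}$ and hence induce mutually inverse isomorphisms on $HF^{(a,b)}$. (The hypothesis $3m(H_0)<a_0$ enters to guarantee $2C+2\delta<a_0-\delta$, so that the $b$-level is preserved; your heuristic ``$b+2m(H_0)<a_0$'' is not implied by the hypotheses, since $b$ may be arbitrarily close to $a_0$.) Your proposal contains no substitute for this step.
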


\begin{proof}

    Let $0 < a < m(H_0) < b < a_0$, and write $ C = m(H_0)$. Notice that $\max |H_0 - F_0| \le C$. 
    Hence, for any $a < b$, from the standard energy estimate (Equation \ref{eqn:action estimate chain maps}),
    the following
    continuation maps increase action by at most $C$:
    \begin{align*}
        \Phi&:HF^{(a, b)}(F_0) \to HF^{(a+C, b+C)}(H_0), \\
        \Psi&:HF^{(a, b)}(H_0) \to HF^{(a+C, b+C)}(F_0).
    \end{align*}
    We claim, however, that in the action window $(a,b)$, these continuation maps are in fact \textit{action non-increasing}. 

    We now prove this claim. By hypothesis and construction of $F_0$, the time-$1$
    chords of both $F_0$ and $H_0$ in the action window $(a, b)$ are precisely the
    constant chords of $\overline{D} = \{ x \in L \, | \, H_0(x) = m(H_0) \}$. 

    Consider now a sequence of perturbations of $F_0, H_0$, which we denote by
    $F_n, H_n$, so $F_n \to F$, $H_n \to H_0$ in $C^\infty$. For large $n$, 
    the time-$1$ chords of $F_n, H_n$ have action close to $C$ and $0$ (when
    capped by approximately constant capping disks), and occur 
    near $\overline{D} = \{ x \in L \mid H_0(x) = m(H_0)\}$ and near $\{ x \in L \, | \, H_0(x) = 0 \}$. 
    In particular then, for any $\delta > 0$, for $n$ sufficiently large, 
    all of the capped chords of $F_n, H_n$ with arbitrary capping have action in the intervals:
    \begin{equation} \label{eqn:perturbed action}
      \Sigma(F_n), \Sigma(H_n) \subseteq  \bigcup_{k \in \Z} (ka_0 - \delta, ka_0 + \delta) \cup \bigcup_{k \in
        \Z} (C+k a_0 -\delta, C+ka_0 +\delta).
    \end{equation}
    By hypothesis, $C  < \frac{a_0}{3}$.  In particular, then, for $\delta$
    small and any $\alpha$ so that $C +\delta < \alpha <
    a_0 -\delta $, 
    \begin{equation}\label{eqn:Squish action}
    	CF^{<\alpha}(F_n; L) = CF^{<C+\delta}(F_n; L),
    \end{equation}
    and similarly for $H_n$.

    Let $\Phi_n, \Psi_n$ be the continuation maps between the chain complexes:
    \begin{align*}
        \Phi_n &\colon CF(F_n; L) \to CF(H_n; L) \\
        \Psi_n &\colon CF(H_n; L) \to CF(F_n; L) 
    \end{align*}
    For $n$ sufficiently large, these chain complexes are the ones that allow us
    to define $HF^{(a,b)}(H_0), HF^{(a,b)}(F_0)$ and $\Phi_n$, $\Psi_n$ induce the maps between them.

    Let now $0 < a < C < b < a_0$, as in the hypotheses of the lemma. In order
    to prove the result, we need to
    show that for $n$ sufficiently large, $\Phi_n$ maps $CF^{<b}(F_n; L)$ to $CF^{<b}(H_n; L)$ and that it
    also maps $CF^{<a}(F_n; L)$ to $CF^{<a}(H_n; L)$. 
    We also need to show a similar result for $\Psi_n$. We will do the case of
    $\Phi_n$ in detail, since the case of $\Psi_n$ is nearly identical.

    From Equation \eqref{eqn:Squish action} with $\alpha = b$, we have (for $\delta > 0$ small, and $n$ sufficiently large) 
    $CF^{<b}(F_n; L) = CF^{<C+\delta}(F_n; L)$. 
    Also for $n$ sufficiently large, $\max |H_n - F_n| < C + \delta$.
    Hence, by the energy estimate \ref{eqn:action estimate chain maps}, we have
    \[
        \Phi_n \colon CF^{<C+\delta}(F_n; L) \to CF^{<2C+2\delta}(H_n; L)
    \]
    Now, $C < \frac{a_0}{3}$, so for $\delta$ small enough $2C + 2\delta < a_0 -
    \delta$.
    Combining this with Equation \ref{eqn:Squish action}, we have
    \[
        CF^{<2C+2\delta}(H_n; L) = CF^{<C+\delta}(H_n; L) = CF^{<b}(H_n; L).
    \]
    This then establishes
    \[
        \Phi_n \colon  CF^{<b}(F_n; L) \to CF^{<b}(H_n; L).
    \]

    We now need to show that $\Phi_n$ maps $CF^{<a}(F_n; L) \to CF^{<a}(H_n; L)$. 
    We will proceed by contradiction. Assume that for a subsequence of $n
    \to \infty$, there exists
    a generator $\gamma_n \in CF^{<a}(F_n; L)$ for which $\Phi_n(\gamma_n) =
    \gamma_n' + \dots$ where $\gamma_n' \in CF(H_n; L)$ has action at least $a$. 

    By Equation \eqref{eqn:perturbed action}, it thus follows that
    $\A_{F_n}(\gamma_n) < \delta$ and $\A_{H_n}(\gamma_n') > C-\delta$.
    Applying the action estimate (Equation \eqref{eqn:action estimate}), we have
    \[
        0 < \A_{F_n}(\gamma_n) - \A_{H_n}(\gamma'_n) + C + \delta
    \]
    In particular then, 
    \begin{equation}\label{eqn:Action reasons}
    	\begin{aligned}
    	\A_{F_n}(\gamma_n)& > -2\delta \\
    	\A_{H_n}(\gamma'_n)& < C + 2\delta.
    	\end{aligned}
    \end{equation}
    
    Let $u_n \colon \R \times [0,1] \to W$ be the continuation map solutions
    that asymptote onto $\gamma_n, \gamma_n'$ at $\mp \infty$. Recall that
    such solutions satisfy the following equation:
    \begin{equation} \label{eqn:continuation map pde}
        \partial_s u_n + J(t, u_n) \partial_t u_n + \rho(s) \nabla H_n(t, u_n)
        + (1- \rho(s) ) \nabla F_n(t, u_n) = 0
    \end{equation}
    where $\rho(s)$ has $\rho(s) = 0$ for $s \le -1$, 
    $\rho(s) = 1$ for $s \ge 1$ and $0 \le \rho'(s) \le 1$ for all $s \in \R$.

    Notice that, by Equation (\ref{eqn:action estimate}) the energy of $u_n$ has:
    \begin{equation} \label{eqn:Energy of u_n}
        \begin{aligned}
            0 \le \int_{\R \times I} \Vert \partial_s u_n \Vert_J^2 ds\wedge dt 
            &\le 
            \A_{F_n}(\gamma_n) - \A_{H_n}(\gamma'_n) + \Vert H_n - F_n \Vert
            \\
            & < \delta - (C-\delta) + (C + \delta) = 3\delta.
        \end{aligned}
    \end{equation}

    In particular then, as $n \to \infty$, this goes to $0$.

    Since the energy of the strips $u_n$ is small (and in particular below the
    area of the disk $a_0$), we cannot bubble a disk or a sphere. Thus, after
    passing to a subsequence, we have that $u_n$ will converge in
    $C^\infty_\loc$ to a function $u_0$. 

    Observe also that since for $\delta > 0$ and $n$ sufficiently large
    \begin{align*}
        &-2\delta < \A_{F_n}(\gamma_n) < \delta \\
        &C-\delta < \A_{H_n}(\gamma'_n) < C+2\delta
    \end{align*}
    for each (large) $n$, there exists $\sigma_n \in \R$ so that 
    \[
        H_0( u_n(\sigma_n, 0) ) = \frac{C}{2}.
    \]

    We now consider three cases, depending on whether (1) $\sigma_n$ has a
    convergent subsequence, (2) a subsequence along which $\sigma_n \to +\infty$ or (3) a
    subsequence along which $\sigma_n \to -\infty$.
    Case 1: After passing to a subsequence, $\sigma_n \to \sigma_0$. Then,
    after passing to a further subsequence as necessary, we also have $u_n \to
    u_0$ in $C^\infty_\loc$, where $u_0$ satisfies the continuation map
    equation \eqref{eqn:continuation map pde}, but interpolating from $F_0$ to
    $H_0$.

    By the energy estimate in Equation \eqref{eqn:Energy of u_n}, $u_0$ must have zero energy, and hence have image in a shared (constant) chord of $F_0, H_0$. 
    On the other hand, since $\sigma_n \to \sigma_0$, $H( u_0(\sigma_0, 0) ) =
    \frac{C}{2}$, which gives a contradiction.

    Case 2: After passing to a subsequence, $\sigma_n \to +\infty$. We
    consider then the reparametrized strips $v_n \colon \R \times [0,1] \to W$ by
    $v_n(s,t) = u_n(s+\sigma_n, t)$.  After passing to a further subsequence, we 
    may have that $v_n$ converges in $C^\infty_\loc$ to a strip $v_0$ with boundary
    on $L$, but satisfying the equation
    \[
        \partial_s v_0 + J \partial_t v_0 + \nabla H_0(v_0) = 0.
    \]
    The energy of the limit $v_0$ is bounded above by the limit of the energy from
    Equation \eqref{eqn:Energy of u_n}. Thus, $v_0$ has zero energy, 
    and hence has its image in a chord of $H_0$, and thus its image is constant. However,
    we have $H(v_n(0,0)) = \lim H(u_n(\sigma_n, 0)) = \frac{C}{2}$. This gives
    a contradiction.

    Case 3: If instead, $\sigma_n \to -\infty$, the same argument as in Case 2
    gives a contradiction, but instead using $F_0$ instead of $H_0$.
 
    These three cases therefore show that for $n$ sufficiently large, 
    and for $0< a< C < b < a_0$, 
    \begin{align*}
        \Phi_n \colon CF^{<b}(F_n; L) &\to  CF^{<b}(H_n; L)\\
        \Phi_n \colon CF^{<a}(F_n; L) &\to  CF^{<a}(H_n; L)
    \end{align*}
    In particular then, $\Phi_n$ induces a map 
    \[
        \Phi_n \colon HF^{(a,b)}(F_n) \to HF^{(a,b)}(H_n)
    \]
    For $n$ sufficiently large, $\Phi_n$ induces the map $\Phi$ (as in
    Definition \ref{def:degenerate filtered Floer})
    \[
        \Phi \colon HF^{(a,b)}(F_0) \to HF^{(a,b)}(H_0).
    \]

    A similar argument (reversing the roles of $H_0, F_0$) 
    allows us to show that, for $n$ large, and for $0< a< C < b < a_0$, 
    \begin{align*}
        \Psi_n \colon CF^{<b}(H_n; L) &\to  CF^{<b}(F_n; L)\\
        \Psi_n \colon CF^{<a}(H_n; L) &\to  CF^{<a}(F_n; L)
    \end{align*}
    and hence, $\Psi_n$ induces a map 
    \[
        \Psi_n \colon HF^{(a,b)}(H_n) \to HF^{(a,b)}(F_n).
    \]
    As before, for $n$ large, $\Psi_n$ induces the map
    \[
        \Psi \colon HF^{(a,b)}(H_0) \to HF^{(a,b)}(F_0).
    \]

    Finally, $\Psi_n$ and $\Phi_n$ (as chain maps) are chain inverses (for the large $n$ for
    which they are defined), and hence $\Phi, \Psi$ are isomorphisms on
    homology.
\end{proof}

\begin{proof}[Proof of Theorem \ref{thm:filtered homology of the maximum}]

    We first apply Lemma \ref{lem:exists small admissible representative} to
    replace $H_0$ by an admissible Hamiltonian $H_1$ with a small maximum $\alpha$, where
    $0 < \alpha < \frac{a_0}{3}$. 
    tony{Now that I think about it, I don't understand why this rescaling is necessary. Can we not prove directly that the $HF(H_0) \cong \Z/2[n]$}
    Furthermore, we choose $\alpha$ sufficiently small
    and $dH_1$ sufficiently small that the hypotheses of Lemma 
    \ref{lem:Floer homology from Morse} are satisfied.

    Then (by Lemma \ref{lem:exists small admissible representative})
    for $0 < \epsilon < \frac{1}{2} \alpha$ we have 
    \[
        HF^{(m(H_0)-\epsilon, m(H_0)+\epsilon)}(H_0) \cong 
        HF^{(m(H_1)-\epsilon, m(H_1)+\epsilon)}(H_1) 
    \]

    Now, by Lemma \ref{lem:Floer homology from Morse}, we have 
    \[
        HF^{(m(H_1)-\epsilon, m(H_1)+\epsilon)}(H_1) 
        \cong
        HF^{(m(F_1)-\epsilon, m(F_1)+\epsilon)}(F_1) 
    \]
    where $F_1 = \widetilde{H_1|_{L}}$ as in Lemma \ref{lem:admissible from
    function}.

    Finally, from Lemma \ref{lem:admissible from function}, we have
    \[
        HF^{(m(F_1)-\epsilon, m(H_1)+\epsilon)}(F_1)  \cong \Z_2[n].
    \]

    From all of this, we therefore obtain
    \[
        HF^{(m(H_0)-\epsilon, m(H_0)+\epsilon)}(H_0) \cong \Z_2[n].
    \]

    Write $\kappa = \floor{\frac{m(H_0)}{a_0}}$.
    The result now follows from the fact that \[
    (a,b) \cap \Sigma(H_0) = \{ m(H_0) \}\]
    for all $a, b$ with  $\kappa a_0 < a < m(H_0) < b < (\kappa+1)a_0$, and in particular, for $a = m(H_0) - \delta$, 
    $b = m(H_0) + \delta$.
\end{proof}

\section{Proof of Theorem \ref{T:displacement}}
\label{S:displaceableLag}

Let $H$ be an admissible Hamiltonian in 
the sense of Definition \ref{def:modified_admissible}, and 
as before, we write $a_0$ for the area of a disk with minimal Maslov
number $N_L$. Fix a Morse function $f \colon L \to \R$, and use it to define the Quantum
Lagrangian complex $CQ(L)$. Let $m_0$ be the class of the maximum in $CQ(L)$.
Since $L$ is displaceable, there exists $y \in CQ(L)$ so that $\partial y = m_0$
and $\A_0(y) \le d(L)$, where $d(L)$ is the displacement energy of $L$.
Notice also that $\A_0(y) = \kappa_0 a_0$ for some $\kappa_0 \in \Z$.

The main idea of this proof is to consider the homotopy from $0$ (whose Floer
homology can be considered to be $QL$) to $H_0$, and the corresponding induced maps on the filtered
Floer homology groups. In order to control the continuation maps, we will need
to consider the composition of the continuation maps as we ``slice up'' the
homotopy.
Let $0 = \tau_0 < \tau_1 < \tau_2 < \dots < \tau_N = 1$.
Then, we will consider the following Hamiltonians:
\[
    0 \le \tau_1 H_0 \le \tau_2 H_0 \le \dots \le \tau_N H_0 = H_0
\]
and the continuation maps from $\tau_k H_0$ to $\tau_{k+1} H_0$.

We consider first the case of the map from $0$ to $\tau_1 H_0$, and afterwards
will consider the maps corresponding to $k \ge 1$.

\begin{lem} \label{lem:base case}

    Let $H_1$ be an admissible Hamiltonian with $0 < m(H_1) < \frac{a_0}{2}$.

    Let $\delta > 0$ be sufficiently small that \begin{align*} 
        &(m(H_1) - 2\delta, m(H_1) + 2\delta) \cap \Sigma(H_1) = \{ m(H_1) \}.  
    \end{align*}

    Let $m_0$ denote the class of the maximum in $CQ(L)$, and let $y \in CQ(L)$ 
    so that $\partial y = m_0$, with $\A_0(y) = \kappa_0 a_0$ for $\kappa_0 \in
    \Z^+$.

   Then the map 
   \[
       \imath \colon HF^{(m(H_1) - \delta, m(H_1) + \delta)}(H_1) \to
       HF^{(m(H_1) - \delta, \kappa_0 a_0+\delta)}(H_1)
   \]
   is the zero map.
\end{lem}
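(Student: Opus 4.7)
The plan is to use the PSS chain map $\Psi \colon CQ(L) \to CF(L;H_1)$ together with the relation $m_0 = \partial y$ to produce a bounding chain for $[M]$ in the Floer complex, and then to refine the naive action bound via a local Floer homology argument near action $m(H_1) + \kappa_0 a_0$.

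First I would note that $H_1 \le m(H_1)$ pointwise, so $\Psi$ is action non-increasing when $CQ(L)$ carries the action $\A_{m(H_1)}$. Applying $\Psi$ to $m_0 = \partial y$ yields
\[
    \partial_{H_1}\Psi(y) = \Psi(m_0), \qquad \Psi(y) \in CF^{\le m(H_1) + \kappa_0 a_0}(L;H_1),
\]
with $|\Psi(y)| = n+1$. By Theorem \ref{thm:filtered homology of the maximum} and the identification (from its proof via Lemma \ref{lem:admissible from function} and Lemma \ref{lem:Floer homology from Morse}) of the local Floer generator with the PSS image of the Morse maximum, $[\Psi(m_0)]$ represents the generator $[M]$ of $HF^{(m(H_1)-\delta, m(H_1)+\delta)}(L;H_1) \cong \Z_2[n]$, so it suffices to exhibit a bounding chain for $\Psi(m_0)$ lying in $CF^{<\kappa_0 a_0 + \delta}(L;H_1)$.

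The naive bound $\Psi(y) \in CF^{\le m(H_1) + \kappa_0 a_0}$ exceeds the target upper bound $\kappa_0 a_0 + \delta$, but the hypothesis $m(H_1) < a_0/2$ makes $m(H_1) + \kappa_0 a_0$ the unique element of $\Sigma(H_1)$ in $(\kappa_0 a_0, (\kappa_0+1)a_0)$, so only the part of $\Psi(y)$ at action exactly $m(H_1) + \kappa_0 a_0$ must be removed. Consider the quotient complex
\[
    C^{\loc} \coloneqq CF^{<m(H_1) + \kappa_0 a_0 + \delta'}(L;H_1) \big/ CF^{\le \kappa_0 a_0 + \delta''}(L;H_1),
\]
for sufficiently small $\delta', \delta'' > 0$ chosen so that the only spectrum value in the action window is $m(H_1) + \kappa_0 a_0$. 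Iterating the $t$-shift of Equation \eqref{eqn:t multiplication} applied to Theorem \ref{thm:filtered homology of the maximum} gives $H(C^{\loc}) \cong \Z_2$ concentrated in degree $n + \kappa_0 N_L$.

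Since $m(H_1) < \kappa_0 a_0$, $\Psi(m_0)$ vanishes modulo $CF^{\le \kappa_0 a_0 + \delta''}$, so the image $[\Psi(y)]^{\loc}$ is a cycle in $C^{\loc}$. Because $|\Psi(y)| = n+1$ and $n+\kappa_0 N_L \ge n + 2 > n + 1$ (using $\kappa_0 \ge 1$ and $N_L \ge 2$), the degree-$(n+1)$ part of $H(C^{\loc})$ vanishes, and $[\Psi(y)]^{\loc}$ is a boundary: $[\Psi(y)]^{\loc} = \partial^{\loc}[Z]^{\loc}$ for some lift $Z \in CF^{<m(H_1)+\kappa_0 a_0 + \delta'}$. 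Then $W \coloneqq \Psi(y) - \partial_{H_1} Z$ lies in $CF^{\le \kappa_0 a_0 + \delta''}(L;H_1) \subset CF^{<\kappa_0 a_0 + \delta}(L;H_1)$ (for $\delta'' < \delta$) and satisfies $\partial_{H_1} W = \Psi(m_0)$, so $[\Psi(m_0)] = 0$ in $HF^{(m(H_1)-\delta, \kappa_0 a_0 + \delta)}(L;H_1)$, giving $\imath = 0$.

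The hard part is the local Floer computation: the iterated $t$-shift forces $H(C^{\loc})$ into degree $n + \kappa_0 N_L$, which by the minimal Maslov assumption $N_L \ge 2$ is strictly greater than $n+1$, and this degree mismatch is exactly what permits the correction of $\Psi(y)$ into the desired action window.
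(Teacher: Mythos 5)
Your proposal is correct and follows essentially the same route as the paper's proof: the PSS map $\Psi$ applied to $\partial y = m_0$, the action bound $\A(\Psi(y)) \le m(H_1)+\kappa_0 a_0$, the identification of the local homology near action $m(H_1)+\kappa_0 a_0$ with $\Z_2[n+\kappa_0 N_L]$ via $t$-multiplication and Theorem \ref{thm:filtered homology of the maximum}, and the degree mismatch with $|y|=n+1$ combined with the spectral gap coming from $m(H_1)<a_0/2$. Your explicit correction step (replacing $\Psi(y)$ by $W=\Psi(y)-\partial Z$ after showing its class in the quotient complex is a boundary) is in fact a more careful rendering of the paper's terser assertion that $\A(\Psi(y))<\kappa_0 a_0+\epsilon$; just note that, since $H_1$ is degenerate, all chain-level statements should be made for a non-degenerate $C^2$-small perturbation as in Definition \ref{def:degenerate filtered Floer}.
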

\begin{proof}

    Let $H$ be non-degenerate and $\epsilon$ $C^2$-close to $H_1$. By making
    $\epsilon > 0$ sufficiently small, we have (by definition):
    \begin{align*}
       HF^{(m(H_1) - \delta, m(H_1) + \delta)}(H_1) 
       &= HF^{(m(H_1) - \delta, m(H_1) + \delta)}(H)\\ 
       HF^{(m(H_1) - \delta, k_0 a_0+\delta)}(H_1)
       &= HF^{(m(H_1) - \delta, k_0 a_0+\delta)}(H)
   \end{align*}

   By Theorem \ref{thm:filtered homology of the maximum}, we have 
   $HF^{(m(H_1) - \delta, m(H_1) + \delta)}(H) \cong \Z_2[n]$.

   Notice also that the action spectrum of $H$ is contained in the following
   \begin{equation} \label{eqn:action spectrum H}
       \Sigma(H) \subset \bigcup_{k \in \Z} \bigg ( (-\epsilon + k a_0, \epsilon + ka_0) \cup 
                    (-\epsilon + m(H_1) + k a_0, \epsilon + m(H_1) + ka_0 )
                    \bigg ).
    \end{equation}

   Consider now the continuation map 
   \begin{align*}
       \Psi &\colon CQ(L) \to CF(H) 
   \end{align*}
   This map $\Psi$ increases action by at most $m(H_1)+\epsilon$. 
   Furthermore, by Theorem \ref{thm:filtered homology of the maximum}, $\Psi(m_0)$ is
   the generator of 
   \begin{equation}\label{eqn:max class non-zero}
		 HF^{(m(H_1) - \delta, m(H_1) + \delta)}(H) \cong \Z_2[n].
   \end{equation}

   Furthermore, the image $\Psi(y)$ has action at most $m(H_1)+\epsilon + \kappa_0
   a_0$ by \ref{eqn:action estimate}. 
    Therefore, the class of $\Psi(m_0)$ is $0$ in $HF^{(m(H_1) - \delta, m(H_1) +
    \epsilon + \kappa_0 a_0)}(H)$. However, by Theorem \ref{thm:filtered homology of the maximum}
   \begin{equation*}
   	HF^{I(m(H_1),\epsilon)}(H) \cong \Z/2[n]
   \end{equation*} and after multiplication by $t^{-\kappa_0}$, Equation \ref{eqn:t multiplication} gives
   \[
   HF^{(m(H_1) + \kappa_0 a_0 -\epsilon, m(H_1)+\kappa_0 a_0 + \epsilon)}(H)  \cong
   \Z/2[n+\kappa_0 N_L]
   \]
   Since $N_L \ge 2$ and $|y| = n+1$, it follows that $\Psi(y)\notin HF^{I(m(H_1)+\kappa_0 a_0,\epsilon)}(H_1)$. Since, by hypothesis, $m(H_1) < a_0/2$, there are no generators of the complex in the action window $(\kappa_0 a_0 + \epsilon, m(H_1) + \kappa_0 a_0 - \epsilon)$, and therefore $\A(\Psi(y)) < \kappa_0 a_0 +\epsilon$
   and $\Psi(y) \in HF^{(m(H_1)-\delta,\kappa_0 a_0 + \delta)}(H)$, as desired.
\end{proof}

\begin{lem} \label{lem:inductive step}

    Let $H_1, H_2$ be admissible Hamiltonians such that 
    $0 < m(H_2) - m(H_1) < \frac{a_0}{2}$ 
    and such that $m(H_j) \notin \Z a_0$.

    Let $\delta > 0$ be sufficiently small that \begin{align*} 
        &(m(H_j) - 2\delta, m(H_j) + 2\delta) \cap \Sigma(H_j) = \{ m(H_j) \}  
    \end{align*}
    for $j=1, 2$.

    Suppose furthermore that \[
        \imath \colon HF^{(m(H_1) - \delta, m(H_1)+\delta)}(H_1) \to 
    HF^{(m(H_1) - \delta, k_0 a_0 +\delta)}(H_1) 
   \]
   is the zero map for some $k_0 \in \Z$ with $m(H_1) < k_0 a_0$.

   Then, $m(H_2) < k_0 a_0$ and the map 
   \[
       \imath \colon HF^{(m(H_2) - \delta, m(H_2) + \delta)}(H_2) \to
       HF^{(m(H_2) - \delta, k_0 a_0+\delta)}(H_2)
   \]
   is the zero map.

\end{lem}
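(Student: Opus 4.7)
My plan is to use the linear-interpolation continuation map $\Phi_{12} : CF(H_1) \to CF(H_2)$, whose action-shift is bounded by $C := \E_+(H_2 - H_1) \le m(H_2) - m(H_1) < a_0/2$. By the continuation-isomorphism part of Theorem \ref{thm:filtered homology of the maximum}, $\Phi_{12}$ restricts to an isomorphism between the local windows $HF^{(m(H_j)-\delta, m(H_j)+\delta)}(H_j) \cong \Z/2[n]$ for $j=1,2$. Naturality of $\Phi_{12}$ with respect to inclusion of filtered pieces then gives a commutative square whose top row is the hypothesised $\imath_1 = 0$ and whose bottom row is the widened inclusion
\[
\imath_2^+ \colon HF^{(m(H_2)-\delta,\, m(H_2)+\delta)}(H_2) \to HF^{(m(H_2)-\delta,\, k_0 a_0 + \delta + C)}(H_2),
\]
so $\imath_2^+ = 0$.

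From this, I would first deduce $m(H_2) < k_0 a_0$ by contradiction. The other alternative allowed by $m(H_j) \notin \Z a_0$ is $m(H_2) > k_0 a_0$; together with $m(H_2) - m(H_1) < a_0/2$ this forces $m(H_1) > (k_0-1) a_0$. A direct check against $\Sigma(H_2) = \Z a_0 \cup (m(H_2) + \Z a_0)$ then shows $\Sigma(H_2) \cap (m(H_2)+\delta,\, k_0 a_0 + \delta + C) = \emptyset$, so Equation~\ref{eqn:shrink interval} identifies $\imath_2^+$ with an isomorphism from $\Z/2[n]$ to itself, contradicting $\imath_2^+ = 0$. Hence $m(H_2) < k_0 a_0$.

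To upgrade to $\imath_2 = 0$, I would factor $\imath_2^+ = j \circ \imath_2$, where $j$ denotes the inclusion from $HF^{(m(H_2)-\delta,\, k_0 a_0 + \delta)}(H_2)$ to $HF^{(m(H_2)-\delta,\, k_0 a_0 + \delta + C)}(H_2)$, and show that $j$ is injective in degree $n$, the degree of the generator $\alpha_2 := \Phi_{12}(\alpha_1)$. By the exact triangle of Equation~\ref{eqn:exact triangle}, the kernel of $j|_n$ is the image of a connecting map from $HF^{(k_0 a_0 + \delta,\, k_0 a_0 + \delta + C)}(H_2)|_{n+1}$. The only spectrum values of $H_2$ inside this small window are shifts $m(H_2) + l a_0$ with $l \ge 1$ (shifts of the $0$-level set lie outside, since $\delta + C < a_0$); by the $t$-multiplication isomorphism of Equation~\ref{eqn:t multiplication}, each such shift contributes a copy of $\Z/2[n + l N_L]$. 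Since $N_L \ge 2$, none of these degrees equals $n+1$, so $j|_n$ is injective and $\imath_2 = 0$ follows. The main obstacle is the last degree-counting step, where the hypothesis $N_L \ge 2$ is essential and where one must carefully verify that no other generators contribute to $HF^{(k_0 a_0 + \delta,\, k_0 a_0 + \delta + C)}(H_2)$ in degree $n+1$.
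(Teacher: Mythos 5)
Your proposal is correct and follows essentially the same route as the paper: the continuation map with action shift $m(H_2)-m(H_1)$, the commutative square forcing the widened inclusion to vanish, the shrink-interval isomorphism to rule out $m(H_2)>k_0a_0$, and the exact triangle plus the degree count $n+lN_L\neq n+1$ (using $N_L\ge 2$) to pass from the widened window back to $k_0a_0+\delta$. The paper phrases the last step as $\operatorname{Im}(\imath_2)\subset\ker(j)=\operatorname{Im}(\delta)$ rather than as injectivity of $j$ in degree $n$, but these are the same argument.
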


\begin{proof}

    Let $\Delta = m(H_2) - m(H_1) < \frac{a_0}{2}$. 

    By considering continuation maps, which we denote by $\Psi$, we have
    \[
        \Psi \colon HF^{(a,b)}(H_1) \to HF^{(a+\Delta, b+\Delta)}(H_2) 
    \]
    for each interval $(a,b)$ for which these both of these filtered homology groups are defined.

    From Theorem \ref{thm:filtered homology of the maximum}, we have that the map
    \[
        \Psi \colon HF^{(m(H_1)-\delta, m(H_1) + \delta)}(H_1) \to
        HF^{(m(H_2)-\delta, m(H_2) + \delta)}(H_2)
    \]
    is an isomorphism (and both of these are isomorphic to $\Z/2$, in degree
    $n$).

    Combining these observations with the naturality of these maps, the following diagram commutes:
    \begin{equation*}
       \begin{tikzcd}
           HF^{(m(H_1)-\delta, m(H_1)+\delta)}(H_1) \arrow{r}{0} \arrow[swap]{d}{\Psi}
           &  HF^{(m(H_1)-\delta, k_0 a_0 +\delta)}(H_1) \arrow{d}{\Psi} \\
           HF^{(m(H_2)-\delta, m(H_2)+\delta)}(H_2) \arrow[swap]{r}{\imath} 
           &  HF^{(m(H_2)-\delta, k_0 a_0 + \Delta + \delta)}(H_2) 
  \end{tikzcd} 
    \end{equation*}
    and hence 
    \begin{equation}\label{eqn:iota zero map}
        \imath \colon 
           HF^{(m(H_2)-\delta, m(H_2)+\delta)}(H_2) \xlongrightarrow{\imath} 
           HF^{(m(H_2)-\delta, k_0 a_0 +\Delta + \delta)}(H_2)
    \end{equation}
    is the zero map.

    We now have two possibilities: either $m(H_2) > k_0 a_0$ or $m(H_2) < k_0
    a_0$.

    Consider first the case when $m(H_2) > k_0 a_0$. 
    Then, we observe that 
    $[m(H_2)+\delta, k_0 a_0 + \Delta + \delta ] \cap \Sigma(H_2) = \emptyset$
    since $k_0 a_0 + \Delta + \delta < (k_0+1) a_0 < m(H_2) + a_0$.
    Thus, by Equation \eqref{eqn:shrink interval}, 
     \[
        \imath \colon 
           HF^{(m(H_2)-\delta, m(H_2)+\delta)}(H_2) \longrightarrow 
           HF^{(m(H_2)-\delta, k_0 a_0 +\Delta + \delta)}(H_2)
    \]
    is an isomorphism, contradicting that $\iota$ is the zero map. 
    
    Hence, it follows that we must be in the second case that $m(H_2) < k_0 a_0$. Consider the exact triangle (of the r2p structure on $HF^{(a,b)}(H_2)$)
    associated to $m(H_2) - \delta < k_0 a_0 + \delta < k_0 a_0 + \Delta +
    \delta$ (as in Equation \ref{eqn:exact triangle}).

    This gives us the following:
    \begin{equation}\label{eqn:Exact triangle sec 4}
        \begin{split}
            HF^{(m(H_2) - \delta, k_0 a_0 +\delta)}&(H_2) 
        \xlongrightarrow{i_1} 
        HF^{(m(H_2) - \delta, k_0 a_0 + \Delta + \delta)}(H_2) 
        \xlongrightarrow{i_2} \\
            \xlongrightarrow{i_2} 
        &HF^{(k_0 a_0 + \delta, k_0 a_0 + \Delta + \delta)}(H_2)
        \xlongrightarrow{\delta} 
        HF^{(m(H_2) - \delta, k_0 a_0 +\delta)}(H_2)[1]
    \end{split}
    \end{equation}

    Consider the intersection 
    \[
        [k_0 a_0 + \delta, k_0 a_0 + \Delta + \delta] \cap \Sigma(H_2).
    \]
    There are two possibilities: this intersection can be trivial, in which
    case 
        \[
         \imath \colon
        HF^{(m(H_2) - \delta, k_0 a_0 +\delta)}(H_2) \longrightarrow
         HF^{(k_0 a_0 + \delta, k_0 a_0 + \Delta + \delta)}(H_2) = {0}
        \]
        is an isomorphism, and hence the claimed result follows. Alternately, it may be that for some $l \in \Z$, 
    \[
        [k_0 a_0 + \delta, k_0 a_0 + \Delta + \delta] \cap \Sigma(H_2) = \{
        m(H_2) + l a_0 \}.
    \]
    In this case, by Equation \eqref{eqn:shrink interval}, and for $\epsilon >
    0$ sufficiently small, we obtain that 
        \[
            HF^{(k_0 a_0 + \delta, k_0 a_0 + \Delta + \delta)}(H_2) \cong 
            HF^{(m(H_2) + la_0 -\epsilon, m(H_2)+la_0 + \epsilon)}(H_2) 
        \]
        By Theorem \ref{thm:filtered homology of the maximum} 
        and Equation \eqref{eqn:t multiplication}, it follows that 
        \[
            HF^{(m(H_2) + la_0 -\epsilon, m(H_2)+la_0 + \epsilon)}  \cong
            \Z/2[n+l N_L]
        \]
        where $\Z/2[n+l N_L]$ is $\Z/2$ in degree $n+l N_L$.

		Let $\eta\colon HF^{(m(H_2)-\delta,m(H_2)+\delta)}(H_2) \to HF^{(m(H_2)-\delta,k_0 a_0+\Delta + \delta)}(H_2)$ be the inclusion. Note that
		\[\iota = i_1 \circ \eta\colon 
		HF^{(m(H_2)-\delta, m(H_2)+\delta)}(H_2) \longrightarrow 
		HF^{(m(H_2)-\delta, k_0 a_0 +\Delta + \delta)}(H_2)\]
		is the zero map by Equation \ref{eqn:iota zero map}, so that $\text{Im}(\eta) \subset \ker(i_1)$. Suppose that $\text{Im}(\eta) \neq \{0\}$. By the exactness of the sequence \ref{eqn:Exact triangle sec 4}, $\text{Im}(\eta) \subset \text{Im}(\delta)$. However, every element in the image of $\delta$ has degree $n+lN_L -1$, and every element in  $HF^{(m(H_2)-\delta, m(H_2)+\delta)}(H_2)$ has degree $n$, which is a contradiction, and therefore $\text{Im}(\eta)= \{0\}$. This proves the result.
\end{proof}

\begin{proof}[Proof of Theorem \ref{T:displacement}]
Theorem \ref{T:displacement} follows from the above lemmas by considering
$0 = \tau_0 < \tau_1 < \dots < \tau_n = 1$ and the corresponding Hamiltonians
$0=\tau_0 H_0, \tau_1 H_0, \tau_2 H_0, \dots, \tau_n H_0 = H_0$, where we choose
$\tau_{k+1} - \tau_k$ sufficiently small so that $(\tau_{k+1}-\tau_k)m(H_0) <
\frac{a_0}{2}$ and $\tau_k m(H_0) \notin \Z a_0$. 
We first apply Lemma \ref{lem:base case} to $\tau_1 H_0$. For each pair 
$\tau_k H_0$ and $\tau_{k+1}H_0$ for $k \ge 1$, we then apply Lemma
\ref{lem:inductive step}.

From this, we conclude from \ref{lem:inductive step} that $m(\tau_k H_0) < \kappa_0 a_0 \le d(L)$ for each
$k$, and, in particular, that $m(H_0) < \kappa_0 a_0 \le d(L)$.

\end{proof}

\bibliography{Mathematics}
\end{document}